\definecolor{refcolor}{RGB}{42,93,176}
\titleformat*{\section}{\large\bfseries}
\titlespacing*\section{0pt}{10pt plus 2pt minus 1pt}{2pt plus 0pt minus 0pt}
\titleformat*{\subsection}{\bfseries}
\titlespacing*\subsection{0pt}{8pt plus 2pt minus 1pt}{1pt plus 0pt minus 0pt}
\newtheorem{theorem}{Theorem}[section]
\newtheorem{lemma}[theorem]{Lemma}
\newtheorem{proposition}{Proposition}
\theoremstyle{definition}
\newtheorem{definition}[theorem]{Definition}
\newcommand{\alphalist}{\renewcommand{\theenumi}{\alph{enumi}}\renewcommand*\labelenumi{(\theenumi)}}
\newcommand\bbR{{\mathbb R}}
\newcommand\onm\operatorname
\newcommand\sset[1]{\{\mskip1.25mu#1\mskip1.25mu\}}
\newcommand\set[2]{\{\mskip1mu #1\mid#2\mskip1mu\}}
\newcommand\clB{{\mathcal B}}
\newcommand\clC{{\mathcal C}}
\newcommand\clX{{\mathcal X}}
\newcommand\clY{{\mathcal Y}}
\newcommand\clN{{\mathcal N}}
\begin{document}\allowdisplaybreaks\mathtoolsset{showonlyrefs}

\thispagestyle{empty}
\setlist{nosep}

\begin{raggedright}
{\LARGE\bf
  The geometry of convergence in numerical analysis
}
\\[15pt]{
  George W.\ Patrick\\
  Department of Mathematics and Statistics,\\
  University of Saskatchewan, S7N5E6\\
  \today
}
\\[10pt]
{\color{lightgray}\rule{\textwidth}{.5pt}}
\parbox{\textwidth}{
  \vspace*{8pt}{\noindent\bf Abstract\\}
  The domains of mesh functions are strict subsets of the underlying space of
continuous independent variables. Spaces of partial maps between topological
spaces admit topologies which do not depend on any metric. Such topologies
geometrically generalize the usual numerical analysis definitions of
convergence.

}
\\[9pt]
{\color{lightgray}\rule{\textwidth}{.5pt}}\\[6pt]
\end{raggedright}

\section{Introduction}
\label{section:introduction}

The numerical analysis of ordinary differential equations is generally cast
within the context of open subsets of $\bbR^n$. This corresponds to the machine
implementations, where almost everything is anyway a tuple of floating point
numbers. However, the familiar $\bbR^n$ structures may lead to constructs that
are not deeply natural, and to unarchitected mathematics. And there is
significant numerical literature specifically targeting the differentiable
category\,---\,see the references within and citations
to~\cite{
  IserlesA-Munthe-KassHZ-NorsettSP-ZannaA-2000-1,   %
  MarsdenJE-PatrickGW-ShkollerS-1998-1,             %
  MarsdenJE-WestM-2001-1%
}.

Consider what it means for a discrete approximation~$y_{h,k}$ to converge to a
solution $y(t)$ of an ordinary differential equation ($k$ is an integer and the
sequence $y_{h,k}$ approximates $y(t)$ at time $t=kh$). Not possible is pointwise convergence, i.e.,
\begin{equation*}
  \lim_{h\to0^+}y_h(t)=y(t),\qquad t\in\onm{domain}(y),
\end{equation*}
where $y_h$ is the (partial) function defined to have values~$y_{h,k}$ at
$t_{h,k}=kh$, because the~$y_h$ are not defined at every~$t$ in the continuum
for which $y(t)$ is defined. So, in general practice
\cite{AscherUM-PetzoldLR-1998-1,
IserlesA-2009-1,
SchatzmanM-2002-1,
SuliE-MayerDF-2003-1},
convergence means something more like
\begin{equation}\label{eq:usual-convergence}
  \lim_{h\rightarrow 0^+}\max_{k=0,1,\ldots,\lfloor t^*/h\rfloor}
  \|y_{h,k}-y(t_{h,k})\|=0.
\end{equation}
Expendiently, the smooth solution has been evaluated at varying times
determined by the domains of the approximations, and one of the $\bbR^n$ norms
utilized to size the differences.

In~\eqref{eq:usual-convergence}, an apparent dependence on some particular norm
has emerged at the level of the most basic definition. The definition does not
cleanly export to the coordinate-invariant context of differentiable
manifolds. A somewhat more sophisticated observation is
that~\eqref{eq:usual-convergence} is inherently asymmetric between the limit
precursors and the target, such as one has in the definition of a Cauchy
sequence, which symmetrically refers to two precursors.

Had there been no distance available in the first place, then the following may
have been more visible: \emph{a sequence of mesh functions $y_n$ converges to a
  solution $y$ if}
\begin{equation}\label{eq:evaluation-convergence-0}
  \parbox{.85\textwidth}{\smallskip\it
    $\displaystyle\lim_{n\to\infty}y_n(t_n)=y(t)$
    whenever $t_n\in\onm{domain}(y_n)$ and $t_n\to t$.}
\end{equation}
This \emph{limit-evaluation convergence} (the evaluation of the limit of
functions is the limit the evaluations) is metric independent. It even exports
to the category of topological spaces, because it does not depend on any smooth
structure. Given a uniformity, it could support a notion of a Cauchy sequence.

But there is reason to favour topologically defined convergence, as opposed to
relying solely on some apriori convergence criteria. For example, it is
generally sufficient in applications to use uniform convergence of derivatives
on compact sets in distributional test function spaces. The natural inductive
limit carrier of this convergence (see
Theorem~\ref{th:membership-convergence-generates-inductive-limit}) is not a
locally convex topological vector space. Of interest in the theory of
distributions is the dual space, but one does not have a Hahn-Banach theorem
without local convexity (\cite{RudinW-1973-1}, esp.~Example~1.47). In fact,
addition of test functions is not continuous unless the local convexity problem
is addressed.

It is common to test a numerical method by machine verification from some fixed
initial condition to some fixed final time. This notion is of course
invariant. But used as a theoretical foundation of convergence, 
the result is not localizable, meaning that if the numerical approximations are
restricted to an arbitrary open subinterval, then that restriction inherently
has varying start values and initial conditions. Local theorems depending on
fixed initial conditions and fixed final time do not cleanly globalize because
patching inherently breaks that constraint.

\section{The convergence criteria}
\label{section:convergence-criteria}

Quite apart from its relation to any topology, the
criteria~\eqref{eq:evaluation-convergence-0} is itself somewhat
dysfunctional. Consider the sequence of partial maps
\begin{equation}\label{eq:evaluation-convergence-eg}
  g_i\colon(-1)^{i+1}[0,1]\rightarrow\bbR,\quad g_i(t)=(-1)^{i+1}t.
\end{equation}
The logical predicate ``$t_i\in(-1)^{i+1}[0,1]$ and $t_i\rightarrow t$'' is
empty unless $t=0$, so that~\eqref{eq:evaluation-convergence-0} is satisfied
vacuously for all $t\ne0$. Under~\eqref{eq:evaluation-convergence-0}, the
sequence $g_i$ converges to any function $g$ such that $g(0)=0$.  An
improvement may be obtained by inserting subsequences, and by restricting the
domain of the limit function:
\begin{equation}\label{eq:evaluation-convergence-1}
  \parbox{.85\textwidth}{\smallskip\it
    (a)~for all strictly increasing $i_j$, if
    $t_j\in\onm{domain}(y_{i_j})$ and $t_j\to t\in\onm{domain}(y)$ then
    $\lim_{j\to\infty}y_{i_j}(t_j)=y(t)$; and~(b)~for all $t\in\onm{domain}y$
    there is a sequence~$t_i\in\onm{domain} y_i$ such that $t_i\rightarrow t$.}
\end{equation}
With this, the $g_i$ in~\eqref{eq:evaluation-convergence-eg} converge to
$g(x)=0$ with domain $\sset{0}$.

About~\eqref{eq:evaluation-convergence-1} a further issue may be apparent: it
allows the possibility of a subsequence $t_j\in\onm{domain}(y_{i_j})$ such that
$t_j$ converges to~$t\not\in \onm{domain}(y)$ and~$y_{i_j}(t_j)$ converges. In
the context of a numerical method, this would correspond to the convergence of
the method where the target limit does not exist, i.e.\ to a false numerical
prediction that a solution exists. The following modification avoids this:
\smallskip
\begin{equation}\label{eq:evaluation-convergence-2}
  \parbox{.85\textwidth}{\smallskip\it
    (a)~for all strictly increasing $i_j$, if
    $t_j\to t$ and $y_{i_j}(t_j)$ both converge
    then $t\in\onm{domain}(y)$ and $\lim_{j\to\infty}y_{i_j}(t_j)=y(t)$;
    and~(b)~for all $t\in\onm{domain}y$ there is a sequence~$t_i\in\onm{domain}
    y_i$ such that $t_i\rightarrow t$.}
\end{equation}
The criteria~\eqref{eq:evaluation-convergence-2} ensures that the domain
of the limit includes times for which the precursors converge.
Rephrasing in terms of a topological spaces $\clX$ and $\clY$, one arrives at
\begin{equation}\label{eq:evaluation-convergence-3}
  \parbox{.85\textwidth}{\smallskip\it
    $f_i\colon A_i\subseteq\clX\rightarrow\clY$ converges to $f\colon
    A\subseteq\clX\rightarrow\clY$ if (a)~for all strictly increasing $i_j$ and
    all $x_j\in A_{i_j}$ such that $x_j$ and $f_{i_j}(x_j)$ both converge,
    $\lim_j x_j\in A$ and $\lim_j f_{i_j}(x_j)=f(\lim_j x_j)$; and~(b)~for all
    $x\in A$ there is a sequence~$x_i\in A_i$ such that $\lim_i x_i=x$.}
\end{equation}
Ideally, one seeks a topology on the space of partial maps, with an
explicitly known and easily visualized neighbourhood base, within which a
sequence of partial maps converges if and only
if~\eqref{eq:evaluation-convergence-3} holds.

\section{Geometric Hypertopologies}
\label{section:geometric-hypertopologies}

Spaces of partial maps are related to spaces of subsets of a set\,---\,the
power set\,---\,because partial maps may be identified with their
graphs. There is substantial literature about topologies of the power set of a
topological space, and the related spaces of partial~maps of continuous
maps~\cite{
BackK-1986-1,                               %
BeerG-1993-2,                               %
BeerG-CasertaA-DiMaioG-LucchettiR-2014-1,   %
ChabautyC-1950-1,                           %
FellJMG-1962-1,                             %
IllanesA-NadlerSB-1999-1,                   %
KuratowskiK-1955-1,                         %
KuratowskiK-1966-1,                         %
KuratowskiK-1968-1,                         %
MichaelE-1951-1,                            %
NadlerSB-1978-1%
}.
This literature may not be so well-known and accessible in the numerical
analysis community; this section provides a review of the essential aspects.

Let $\clX$ be a topological space. A \emph{hypertopology} of $\clX$ is a
topology on a subset of $2^\clX$; a \emph{hyperspace} is such a topological
space. The \emph{geometric hypertopologies} are those defined using the only
topology on $\clX$, as opposed to for example using a metric on $\clX$.

It may be useful to have in mind the following example. If by definition
$\lim_i A_i=\set{\lim x_i}{x_i\in A_i}$ (membership in the limit is the limit
of membership), then the sequence $(-1)^i[0,1]\to\sset{0}$ while the odd and
even subsequences converge to $[-1,0]$ and $[0,1]$, respectively. Simple-minded
criteria may not be so useful.

\subsection{Kuratowski-Painlev\'e convergence}
\label{section:kuratowski-painleve-convergence}

Although here they will be the objective because the interest is numerical
analysis, sequence convergence does not generally suffice for topology. Net
convergence, where the index set is generalized to a directed ordered set, does
suffice. Net convergence is a elementary topic that is available in many texts
e.g.~\cite{RundeV-2005-1,WillardS-1970-1}. For convenience, some relevant
aspects, particularly focused on utilizing net convergence to generate
topologies, are collected in an appendix to this article.

\begin{definition}\label{df:KP-convergence}\mbox{}
\begin{enumerate}\alphalist
\item
  $x\in\clX$ is a \emph{limit point} of a net of subsets
  $A_\lambda\subseteq\clX$ if, for all open $U\ni x$, $A_\lambda\cap
  U\ne\emptyset$ finally i.e.~for all open $U\ni x$ there is a 
  $\lambda^*$ such that $A_\lambda\cap U\ne\emptyset$ for all
  $\lambda\ge\lambda^*$.  The \emph{lower closed limit} or \emph{ Kuratowski
  limit inferior} of a net $A_\lambda$ is the set of its limit points,
  denoted $\onm{Li}_\lambda A_\lambda$.
\item
  $x\in\clX$ is a \emph{cluster point} of a net of subsets
  $A_\lambda\subseteq\clX$ if, for all open $U\ni x$, $A_\lambda\cap
  U\ne\emptyset$ cofinally i.e.~for all open $U\ni x$ and all $\lambda^*$ there
  is a $\lambda\ge\lambda^*$ such that
  $A_\lambda\cap U\ne\emptyset$. The \emph{upper closed limit} or \emph{
  Kuratowski limit superior} of a net $A_\lambda$ is the set of its cluster
  points, denoted $\onm{Ls}_\lambda A_\lambda$.
\item
  A net of subsets $A_\lambda$ \emph{Kuratowski-Painlev\'e converges to $A$} if
  $\onm{Li}A_\lambda=A$ and $\onm{Ls}A_\lambda=A$, denoted
  $A=\onm{K-lim}A_\lambda$.
\end{enumerate}
\end{definition}

The notations in Definition~\ref{df:KP-convergence} conform to
\cite{BeerG-1993-2}. Obviously, $\onm{Li} A_\lambda\subseteq\onm{Ls}
A_\lambda$. $\onm{Li} A_\lambda$ and $\onm{Ls} A_\lambda$ are both closed: If
$x\in\onm{cl}(\onm{Li} A_\lambda)$ and $U\ni x$ is open then $U\cap\onm{Li}
A_\lambda\ne\emptyset$. Choose $y\in U\cap\onm{Li} A_\lambda$.  Then $y\in U$,
so there is a $\lambda^*$ such that $A_\lambda\cap U\ne\emptyset$, which
suffices to show that $x\in\onm{Li} A_\lambda$. The proof for $\onm{Ls}
A_\lambda$ is similar.

Limits and cluster points are defined in terms of neighbourhoods of the
underlying topology, but they have equivalent expressions in terms of
net convergence.

\begin{proposition}\label{pp:limit-points-and-limits}
If $A_\lambda$ is a net of subsets of $\clX$ then
\begin{enumerate}\alphalist
  \item
    $x\in\clX$ is a cluster point of $A_\lambda$ if and only if there is a
    subnet $A_{\lambda_\mu}$ and a net $x_\mu\in A_{\lambda_\mu}$ such that
    $x_\mu\to x$.
  \item
    $x\in\clX$ is limit point of $A_\lambda$ if and only if it is a cluster
    point of every subnet of $A_\lambda$.
\end{enumerate}
If $\clX$ is first countable and $A_i$ is a sequence of subsets of $\clX$, then
\begin{enumerate}\alphalist\setcounter{enumi}{2}
  \item
    $x$ is a cluster point of $A_i$ if and only if there is a strictly
    increasing $i_j$ and $x_j\in A_{i_j}$ such that $x_j\to x$.
  \item
    $x$ is a limit point of $A_i$ if and only if there is a sequence $x_i\in
    A_i$ such that $x_i\to x$.
\end{enumerate}
\end{proposition}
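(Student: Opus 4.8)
The plan is to prove the four equivalences by unwinding the definitions of cluster point and limit point in terms of neighbourhood filters, and then translating those neighbourhood conditions into statements about subnets and nets. For part (a), the forward direction requires building a subnet from a cluster point: given that $x$ is a cluster point, for every open $U\ni x$ and every $\lambda^*$ there is $\lambda\ge\lambda^*$ with $A_\lambda\cap U\ne\emptyset$. The natural index set for the subnet is the directed set of pairs $(\lambda^*,U)$ where $U$ ranges over a neighbourhood base at $x$, ordered by $\lambda^*$ increasing and $U$ shrinking. For each such pair I would pick $\lambda\ge\lambda^*$ with $A_\lambda\cap U\ne\emptyset$ and then select $x_\mu\in A_\lambda\cap U$; the construction forces $x_\mu\to x$ because eventually $x_\mu$ lies in any prescribed neighbourhood of $x$, and the map is a genuine subnet because the chosen $\lambda$ is cofinal. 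The reverse direction of (a) is routine: if $x_\mu\to x$ with $x_\mu\in A_{\lambda_\mu}$, then any open $U\ni x$ eventually contains $x_\mu$, which witnesses $A_{\lambda_\mu}\cap U\ne\emptyset$ cofinally in the original net.

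For part (b), I would exploit part (a) together with the definitions. A limit point meets every neighbourhood finally, i.e.\ cofinally in every subnet, so it is a cluster point of every subnet by definition-chasing; conversely, if $x$ fails to be a limit point, there is an open $U\ni x$ and a cofinal set of indices $\lambda$ with $A_\lambda\cap U=\emptyset$, and these indices form a subnet of which $x$ is not a cluster point. The key observation is that ``$A_\lambda\cap U\ne\emptyset$ finally'' is exactly ``$A_\lambda\cap U\ne\emptyset$ cofinally in every subnet,'' which is a standard net-theoretic dual and can be stated cleanly without further machinery.

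For the sequence statements (c) and (d), the hypothesis of first countability is what lets me replace arbitrary subnets by subsequences and arbitrary nets by sequences. Fix a countable decreasing neighbourhood base $U_1\supseteq U_2\supseteq\cdots$ at $x$. In (c), from the cluster-point condition I would inductively choose strictly increasing indices $i_j$ and points $x_j\in A_{i_j}\cap U_j$, forcing $x_j\to x$ since $x_j\in U_j$; the converse is immediate because a convergent subsequence witnesses the cofinal intersection condition. Part (d) is analogous but simpler: the limit-point condition gives, for each $j$, an index $N_j$ beyond which all $A_i$ meet $U_j$, and I can assemble a single sequence $x_i\in A_i$ with $x_i\in U_j$ for $i\ge N_j$ by choosing $x_i\in A_i\cap U_{j(i)}$ where $j(i)$ is the largest $j$ with $N_j\le i$; this yields $x_i\to x$. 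The converse again follows directly from the definition of sequential convergence.

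The main obstacle I anticipate is the subnet construction in part (a): subnets are defined via a cofinal monotone map from a possibly different directed set, and one must verify both that the index set of pairs $(\lambda^*,U)$ is directed and that the assignment to the original index $\lambda$ is monotone and cofinal, so that $x_\mu$ is genuinely a net along a subnet rather than an arbitrary selection. The first-countable reductions in (c) and (d) are comparatively mechanical once the diagonal/interleaving choice is set up correctly, the only subtlety being to ensure strict monotonicity of the extracted index sequence in (c), which the inductive choice $i_{j+1}>i_j$ handles directly.
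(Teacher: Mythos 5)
Your parts (b), (c), and (d) follow the paper's argument essentially verbatim (the paper likewise takes a countable base, makes the $N_j$ increasing, and interleaves choices), and your reverse direction of (a) is the paper's argument as well. The genuine issue is the forward direction of (a). You index your subnet by \emph{all} pairs $(\lambda^*,U)$ and then, for each pair, pick some $\lambda(\lambda^*,U)\ge\lambda^*$ with $A_{\lambda(\lambda^*,U)}\cap U\ne\emptyset$, asserting the result ``is a genuine subnet because the chosen $\lambda$ is cofinal.'' But the definition of subnet in force here (stated in the paper's appendix, and which you yourself quote: a \emph{cofinal monotone} map) also requires monotonicity, and a choice-based assignment is not monotone: if $(\lambda_1^*,U_1)\ge(\lambda_2^*,U_2)$, you get $\lambda(\lambda_1^*,U_1)\ge\lambda_1^*\ge\lambda_2^*$, but nothing forces $\lambda(\lambda_1^*,U_1)\ge\lambda(\lambda_2^*,U_2)$, since the choice made at $(\lambda_2^*,U_2)$ may lie far above $\lambda_2^*$. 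What you build is a subnet only in the weaker Kelley sense (finality of the index map), not in the sense the proposition is using, so the step ``the map is a genuine subnet'' fails as written; you anticipated exactly this obstacle but did not actually resolve the monotone half of it.

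The repair is the paper's construction: index the subnet by $\set{(\lambda,U)}{A_\lambda\cap U\ne\emptyset}$\,---\,pairs for which the intersection is \emph{already} nonempty\,---\,ordered by $\lambda$ increasing and $U$ shrinking, with the subnet map being the projection $(\lambda,U)\mapsto\lambda$. The projection is trivially monotone, and both directedness and cofinality of this index set are exactly the cluster-point hypothesis: given $(\lambda_1,U_1)$ and $(\lambda_2,U_2)$ in the set, take $\lambda^*\ge\lambda_1,\lambda_2$ and apply the cluster-point property to $U_1\cap U_2$ and $\lambda^*$ to produce a common upper bound in the set. Choosing $x_{(\lambda,U)}\in A_\lambda\cap U$ then gives the required net converging to $x$. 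With that substitution the rest of your outline goes through; the only remaining small point is in (d), where you should take the $N_j$ strictly increasing (as you may, without loss of generality) so that your ``largest $j$ with $N_j\le i$'' is well defined even when the $N_j$ do not tend to infinity on their own.
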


\begin{proof}
(a)
If $x$ is a cluster point of $A_\lambda$ then the set pairs
$\set{(\lambda,U)}{A_\lambda\cap U\ne\emptyset}$ (with the ordering
$(U_1,\lambda_1)\ge (U_2,\lambda_2)$ if $U_1\subseteq U_2$ and
$\lambda_1\ge\lambda_2$) is directed. $A_{\lambda,U}$ is a subnet of
$A_\lambda$ and picking $x_{(\lambda,U)}\in A_\lambda\cap U$ provides a
suitable net converging to $x$. Conversely, suppose $A_{\lambda_\mu}$ is a
subnet of $A_\lambda$, $x_\mu\in A_{\lambda_\mu}$ and $x_\mu\to x$, and let
$\lambda^*\in\Lambda$. Given an open $U\ni x$ there is a $\mu_1^*$ such that
$x_\mu\in U$ if $\mu\ge\mu_1^*$, and there is a $\mu_2^*$ such that
$\lambda_\mu\ge\lambda^*$, for $\mu\ge\mu_2^*$. Choose $\mu$ such that
$\mu\ge\mu_1^*$ and $\mu\ge\mu_2^*$. For that 
$\mu$, $x_\mu\in A_{\lambda_\mu}\cap U$, providing $\lambda_\mu\ge\lambda^*$
and $A_{\lambda_\mu}\cap U\ne\emptyset$.

(b)
If $x$ is a limit point of $A_\lambda$ then it is a limit point of every subnet
of $A_\lambda$, and every limit point is a cluster point. Conversely, suppose
that $x$ is not a limit point. Then there is an open $U\ni x$ such that for all
$\lambda^*$ there is a $\lambda\ge\lambda^*$ such that $A_\lambda\cap
U=\emptyset$, and the set $\set{\lambda}{A_\lambda\cap U=\emptyset}$ provides a
subnet of $A_\lambda$ which has no subnet that converges to $x$.

(c)
Suppose $x$ is a cluster point of $A_i$ and let $U_j$ be a countable
neighbourhood base at $x$. Choose $i_j$ strictly increasing such that
$A_{i_j}\cap U_j\ne\emptyset$ and choose $x_{i_j}$ is that set. The converse is
immediate from~(1) because any subsequence
of $A_i$ is a subnet of that.

(d)
Suppose $x$ is a limit point of $A_i$ and let let $U_j$ be a countable
neighbourhood base at $x$.  For all~$j$ there is $N_j$ such that $i\ge N_j$
implies $A_i\cap U_j\ne\emptyset$. Without loss of generality assume $N_j$ is
increasing. Inductively choosing a sequence $x_i\in A_i$ such that $x_i\in U_1$
for $N_1\le i< N_2$, $x_i\in U_2$ for $N_2\le i<N_3$, and so on, provides an
$x_i\in A_i$ such that $x_i\to x$. Conversely, any such sequence obviously
provides final nonempty intersection of $A_i$ with any neighbourhood of
$x$.
\end{proof}

From $\onm{Li}A_\lambda\subseteq\onm{Ls}A_\lambda$ follows that
$A=\onm{K-lim}A_i$ if and only if $A\subseteq\onm{Li}A_\lambda$ and
$\onm{Ls}A_\lambda\subseteq A$ (\cite{BeerG-1993-2}, Lemma~5.2.4). In the case
that $\clX$ is first countable, this obtains the convergence criteria
\begin{equation}
  \parbox{.85\textwidth}{\smallskip\it
  (a)~if $i_j$ is strictly increasing and $x_j\in A_{i_j}$ such that $x_j\to x$
    then $x\in A$; and (b)~if $x\in A$ then there are $x_i\in A_i$ such that
    $x_i\to x$.}
\end{equation}
Also, in that case, a sequence of subsets $A_i$ converges if and only if every
$x_j\in A_{i_j}$ with $x_j\to x$ admits an extension $\hat x_i\in A_i$
(i.e.\ $x_j=\hat x_{i_j}$) such that $\hat x_i\to x$, (e.g.~the limit of
$(-1)^i[0,1]$ does not exist): given convergence, such an $x$ is a cluster point
by Proposition~\ref{pp:limit-points-and-limits}(c), and hence is a limit
point, so by~Proposition~\ref{pp:limit-points-and-limits}(d) there is a
$\hat x_i\in A_i$ such that $\hat x_i\to x$, and replacing $\hat x_{i_j}$ with
$x_{i_j}$ provides the required extension. Conversely, if $x$ is a cluster
point then there is an $x_j\in A_{i_j}$ such that $x_j\to x$, the extension
provides that $x$ is a limit point, and Kuratowski-Painlev\'e convergence
holds. The equivalence of
\begin{equation}\label{eq:KP-sequential-convergence-final}
  \parbox{.85\textwidth}{\smallskip\it
  (a)~$x_j\in A_{i_j}$ and $x_j\to x$ implies $x\in A$; and
  (b)~there is a sequence $x_j\in A_{i_j}$ such that $x_j\to x$.}
\end{equation}
and Kuratowski-Painlev\'e convergence follows because, for any strictly
increasing $i_j$, $x_i\in A_i$ such that $x_i\to x$ gives by restriction
$x_{i_j}\to x$.

\subsection{The Vietoris and co-compact hypertopologies}
\label{section:vietoris-cocompact-hypertopologies}

\smallskip

\begin{figure}[t]
\hfil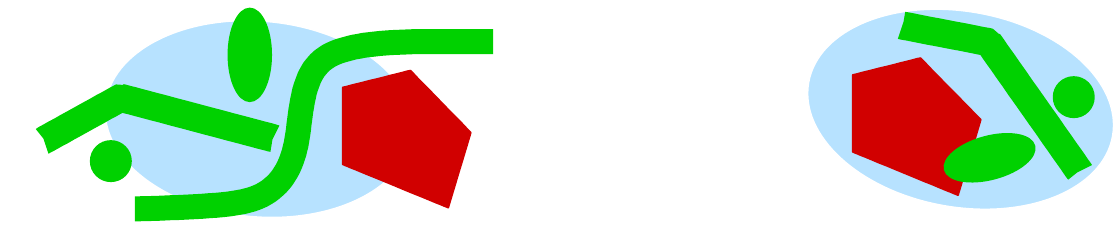\hfil
\renewcommand{\baselinestretch}{.95}
\caption{
Right: a subbasic neighbourhood of $A$ in the upper Vietoris topology is
defined by an open set $U$. The set $A$ is contained in $U$ and the green sets
are in the subbasic neighbourhood are contained in $U$. As $U$ shrinks, every
point in the green sets in drawn to some point in $A$; everything approximable
should be within $A$ if that is closed, corresponding to an ``upper'' type
convergence. At left, in the lower Vietoris topology, the green sets only have
to meet $U$, and shrinking $U$ around a fixed point of $A$ generates
approximations when the green sets also meet $U$, corresponding to a ``lower''
type convergence.
}
\end{figure}

\begin{definition}\label{df:geometric-hypertopologies}
Let $\clX$ be a topological space.
\begin{enumerate}
  \item\label{df:lower-hypertopologies}
    The \emph{lower Vietoris topology [\,lower co-compact topology\,]} on
    $2^\clX$ is generated by the union of
    $\set{A\subseteq\clX}{A\cap U\ne\emptyset}$ over $U\subseteq\clX$ open
    [\,$U\subseteq\clX$ is co-compact\,].
  \item\label{df:upper-hypertopologies}
    The \emph{upper Vietoris topology [\,upper co-compact topology\,]} on
    $2^\clX$ is generated by the union of $\set{A\subseteq\clX}{A\subseteq U}$
    over $U\subset\clX$ open [\,$U\subset\clX$ co-compact\,].
\end{enumerate}
\end{definition}
There are a variety of hypertopologies and the literature is
extensive, e.g.,
\cite{%
  BeerG-1993-2,               %
  IllanesA-NadlerSB-1999-1,   %
  KuratowskiK-1966-1,         %
  KuratowskiK-1968-1,         %
  MichaelE-1951-1,            %
  NadlerSB-1978-1}.
\cite{LucchettiR-PasqualeA-1994-1} presents a systematization and a convenient
summary table. The notation here is not quite standard: it seems that co-compact
is generally defined to mean upper co-compact and the lower co-compact topology
does not appear. The appearance of compactness in these definitions does not
strike as particularly compelling but it is posteriori justified
by~Definition~\ref{df:fell-topology}.

The topologies in Definition~\ref{df:geometric-hypertopologies}
are often referred to as \emph{hit and miss}, since they are generated by
subbases which hit, or set theoretically meet, an open [co-compact] set, and
miss, or are set theoretically contained in, the complement of a closed
[compact] set (for the upper topologies, one can use the alternates
$\set{A}{A\cap K=\emptyset}$ over compact or closed $K$, corresponding to
``miss'').  The \emph{Vietoris topology} or \emph{exponential
  topology}~\cite{KuratowskiK-1966-1} is the join of the upper and lower
Vietoris topologies.

If $\clX$ is Hausdorff then every compact set is closed, every co-compact set
is open, the union of $\set{A\subseteq\clX}{A\subseteq U}$ over co-compact $U$
is contained in the union of that over open $U$, so the Vietoris topologies are
finer than the co-compact topologies. If $\clX$ is a compact Hausdorff space
then the co-compact subsets and the open subsets of $\clX$ coincide, so in that
case the upper Vietoris [lower Vietoris] and the upper co-compact [lower
co-compact] topologies are the same.

\begin{theorem}
\label{th:hyperspace-topology-restriction-natural}
The Vietoris [\,co-compact, assuming $\clY$ is closed\,] topologies on $2^\clX$
are natural with respect to subspaces, i.e.\ if $\clY\subset\clX$ then the
topology of $2^\clY$ as a subspace of $2^\clX$ is the corresponding topology on
$2^\clY$.
\end{theorem}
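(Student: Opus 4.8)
The plan is to argue entirely at the level of subbases. Recall two standard facts: the trace on a subspace of a subbasis for a topology is a subbasis for the subspace topology, and a join of topologies is generated by the union of subbases for the joined topologies. Since both the Vietoris and the co-compact topologies are joins of an upper and a lower piece, it suffices to treat the upper and lower generating families separately and then recombine. With $2^\clY\subseteq 2^\clX$ (a subset of $\clY$ being a subset of $\clX$), I would, for each generating family, establish a two-way correspondence: (i)~the trace on $2^\clY$ of each ambient subbasic set is a subbasic set of the intrinsic topology on $2^\clY$, which gives that the subspace topology is contained in the intrinsic one; and (ii)~every intrinsic subbasic set on $2^\clY$ arises as such a trace, which gives the reverse containment. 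Equality of the generating families then yields equality of the topologies.

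For the Vietoris topologies this needs no hypothesis on $\clY$. The open sets of $\clY$ are exactly the traces $U\cap\clY$ of open $U\subseteq\clX$, and for $A\subseteq\clY$ one has $A\cap U=A\cap(U\cap\clY)$ and $A\subseteq U\iff A\subseteq U\cap\clY$. Consequently $\set{A\subseteq\clX}{A\cap U\ne\emptyset}\cap 2^\clY=\set{A\subseteq\clY}{A\cap(U\cap\clY)\ne\emptyset}$ and $\set{A\subseteq\clX}{A\subseteq U}\cap 2^\clY=\set{A\subseteq\clY}{A\subseteq U\cap\clY}$ are precisely the lower and upper Vietoris subbasic sets on $2^\clY$ determined by the open set $U\cap\clY$, and as $U$ ranges over the open sets of $\clX$ the set $U\cap\clY$ ranges over all open sets of $\clY$. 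Both (i) and (ii) are therefore immediate.

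For the co-compact topologies the identical identities apply with $U$ now ranging over co-compact sets, i.e.\ $U=\clX\setminus K$ with $K$ compact. Direction (ii) is free, since compactness of a subset is intrinsic: if $L$ is compact in $\clY$ it is compact in $\clX$, so $\clX\setminus L$ is co-compact in $\clX$ and its trace on $2^\clY$ recovers the intrinsic subbasic set determined by $\clY\setminus L$. The main obstacle is direction (i), and it is exactly here that closedness of $\clY$ is needed. For $K\subseteq\clX$ compact the trace identities reduce the two ambient subbasic sets to $\set{A\subseteq\clY}{A\cap(\clY\setminus(K\cap\clY))\ne\emptyset}$ and $\set{A\subseteq\clY}{A\subseteq\clY\setminus(K\cap\clY)}$, and these are intrinsic co-compact subbasic sets on $2^\clY$ precisely when $K\cap\clY$ is compact in $\clY$. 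When $\clY$ is closed in $\clX$, $K\cap\clY$ is a closed subset of the compact set $K$ and hence compact, so the argument closes; without closedness $K\cap\clY$ can fail to be compact and the trace can fail to be open in the intrinsic co-compact topology, which is why the hypothesis cannot simply be dropped. Assembling (i) and (ii) for the upper and lower families and taking joins gives the asserted identification of the subspace topology on $2^\clY$ with the intrinsic one.
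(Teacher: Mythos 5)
Your proposal is correct and takes essentially the same approach as the paper: the paper's proof is exactly the subbase-trace comparison you describe, resting on the identities $2^\clY\cap\set{A\subseteq\clX}{A\cap U\ne\emptyset}=\set{A\subseteq\clY}{A\cap(U\cap\clY)\ne\emptyset}$ and its upper analogue, together with the same two compactness facts ($L$ compact in $\clY$ implies $L$ compact in $\clX$; $K\cap\clY$ compact in $\clY$ when $\clY$ is closed and $K$ is compact in $\clX$). The differences are only cosmetic\,---\,you write the co-compact subbasic sets uniformly as complements $\clX\setminus K$ and make the two containment directions (i)/(ii) explicit, whereas the paper states the same correspondences case by case.
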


\begin{proof}
The unions, over open $U\subseteq\clX$, of the left and right sizes of
\begin{equation*}
  2^\clY\cap\set{A\subseteq\clX}{A\cap U\ne\emptyset}=
  \set{A\subseteq\clY}{A\cap(U\cap\clY)\ne\emptyset}.
\end{equation*}
are subbases for the subspace topology on $2^\clY$ and its lower Vietoris
topology, respectively. Similarly,
\begin{equation*}
  2^\clY\cap\set{A\subseteq\clX}{A\subseteq U}=
  \set{A\subseteq\clY}{A\subseteq\clY\cap U}.
\end{equation*}
shows the two two upper Vietoris are the same. For the upper co-compact
topology, the unions of
\begin{equation*}
  2^\clY\cap\set{A\subseteq\clX}{A\cap K\ne\emptyset},
  \qquad
  \set{A\subseteq\clY}{A\cap L\ne\emptyset},
\end{equation*}
over compact $K\subseteq\clX$ and $L\subseteq\clY$ are subbases for the
subspace lower Vietoris topology on $2^\clY$, and the lower Vietoris topology
on $2^\clY$ using the subspace topology on $\clY$. The two collections
correspond for choices of $K$ and $L$: if $\clY$ is closed and $K$ is compact
in $\clX$, then $L=K\cap\clY$ is compact in $\clY$, while if if $L$ is compact
in $\clY$ then it is compact in $\clX$. Similarly, for $K\subseteq\clX$ compact
and $L\subseteq\clY$ compact, the two collections
\begin{equation*}
  2^\clY\cap\set{A\subseteq\clX}{A\cap(\clX\setminus K)\ne\emptyset},
  \qquad
  \clN_\clY(L)\equiv\set{A\subseteq\clY}{A\cap(\clY\setminus L)\ne\emptyset},
\end{equation*}
correspond: if $\clY$ is closed and $K$ is compact in $\clX$, then $K\cap\clY$
is compact in $\clY$ and
$A\cap(\clX\setminus K)=A\cap(\clY\setminus K)
  =A\cap(\clY\setminus(K\cap \clY))$, while if $L$ is compact in $\clY$ then
$L$ is compact in $\clX$ and $A\subseteq\clY$ implies $A\cap(\clX\setminus
L)=A\cap(\clY\setminus L)$.
\end{proof}

Sequences are the target here and so the countability of these hypertopologies
is a focus. As it turns out, the most natural route to sequential convergence,
first countability, is a deeper problem~\cite{BeerG-1993-1}. However, the
(general) topologies for the underlying spaces of numerical analysis are
simple: the usual assumption is at least a second countable locally compact
Hausdorff space (and therefore paracompact). Second countability is stronger
than first and can be relatively easily passed to hypertopologies:

\begin{theorem}\label{th:hypertopology-countable}
If $\clX$ is a second countable [\,and locally compact Hausdorff\,] then the
lower Vietoris [\,upper co-compact\,] topology is second countable.
\end{theorem}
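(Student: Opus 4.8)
The plan is to treat the two assertions separately and, in each, to exhibit an \emph{explicit countable subbase}; since the finite intersections of a countable subbase form a countable base, this yields second countability.

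For the lower Vietoris topology I would fix a countable base $\sset{B_n}$ for $\clX$ and claim that the sets $\set{A}{A\cap B_n\ne\emptyset}$, $n\in\mathbb{N}$, form a subbase. Each of these is lower Vietoris subbasic, so it generates no more than the lower Vietoris topology. Conversely, for open $U$ one has $U=\bigcup_{B_n\subseteq U}B_n$, and a set meets a union exactly when it meets one of the constituents, so $\set{A}{A\cap U\ne\emptyset}=\bigcup_{B_n\subseteq U}\set{A}{A\cap B_n\ne\emptyset}$; thus every lower Vietoris subbasic set is already open in the generated topology, the two topologies coincide, and the subbase is countable. This half is routine.

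For the upper co-compact topology the subbasic sets are $K^+:=\set{A}{A\cap K=\emptyset}$ for compact $K$, and $K_1^+\cap K_2^+=(K_1\cup K_2)^+$, so the subbase is already closed under finite intersection. The obstacle is that the compact sets form an uncountable index family; I must replace them by a countable family that is rich enough. Using second countability together with local compactness, I would first pass to a countable base $\sset{V_n}$ of \emph{relatively compact} open sets (restrict any countable base to the elements whose closures are compact, which remains a base), and then let $\mathcal{K}$ be the countable family of finite unions $\overline{V_{n_1}}\cup\cdots\cup\overline{V_{n_k}}$, each of which is compact and which is closed under finite unions.

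The crux is to show $\sset{C^+ : C\in\mathcal{K}}$ is a base, equivalently that each $K^+$ is a union of such sets. Since $C^+\subseteq K^+$ precisely when $K\subseteq C$, it suffices to prove: for every member $A$ of the hyperspace with $A\cap K=\emptyset$ there is a $C\in\mathcal{K}$ with $K\subseteq C$ and $C\cap A=\emptyset$. This is the one place local compactness is used, and it is where the members of the hyperspace must be \emph{closed} (the natural carrier of $\onm{Li}$ and $\onm{Ls}$, and of the graphs of continuous partial maps); on the full $2^\clX$ the point $\clX\setminus K$ has $K^+$ as a minimal neighbourhood, forcing the uncountable collection $\sset{K^+}$ into every base. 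Indeed, for closed $A$ the set $\clX\setminus A$ is open and contains the compact $K$, so local compactness furnishes a relatively compact open $W$ with $K\subseteq W\subseteq\overline W\subseteq\clX\setminus A$; covering $K$ by finitely many $V_{n_i}$ with $\overline{V_{n_i}}\subseteq W$ and setting $C=\bigcup_i\overline{V_{n_i}}\in\mathcal{K}$ gives $K\subseteq C\subseteq\overline W$, hence $A\cap C=\emptyset$ and $A\in C^+\subseteq K^+$. As $\mathcal{K}$ is countable and closed under finite unions, $\sset{C^+ : C\in\mathcal{K}}$ is then a countable base and the upper co-compact topology is second countable.
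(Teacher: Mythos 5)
Your proof is correct and follows essentially the same route as the paper's: a countable base of $\clX$ yields a countable subbase for the lower Vietoris topology, and for the upper co-compact topology one passes to relatively compact basic open sets, takes finite unions of their closures, and covers a compact $K$ missed by $A$ inside the open set $\clX\setminus A$. The one place you go beyond the paper is in making explicit, via the minimal-neighbourhood observation at the point $\clX\setminus K$, why the upper co-compact half must be read on the hyperspace of \emph{closed} sets; the paper's proof invokes ``$A$ is closed'' tacitly even though Definition~\ref{df:geometric-hypertopologies} places the topology on all of $2^\clX$, so your remark is a genuine (and correct) sharpening of how the statement must be interpreted for its use in Theorem~\ref{th:fell-topology-properties}.
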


\begin{proof}
Suppose $\clX$ has a countable basis $\clB_0$. For the lower Vietoris
topology, define
$\clN(U)\equiv\set{A\in2^\clX}{A\cap U\ne\emptyset}$.
It suffices to show $\clB\equiv\set{\clN(U)}{\mbox{$U$ is open}}$ and
$\clB'\equiv\set{\clN(U)}{U\in\clB_0}$ are equivalent subbases, the first being
the defining subbase of the lower Vietoris and the second being
countable. Indeed, $\clB'\subseteq\clB$, while if $\clN(U)\in\clB$ and
$A\in\clN(U)$ then let $x\in A$ and choose $V\in\clB$ such that
$x\in V\subseteq U$, and the result follows because
$A\in\clN(V)\subseteq\clN(U)$. 

For the upper co-compact topology, define
$\clN(K)\equiv\set{A\subseteq\clX}{A\cap K=\emptyset}$. Because $\clX$ is
locally compact and Hausdorff, the set $\clB_{00}$ of relatively compact
subsets of $\clB_0$ is a countable basis of $\clX$. It suffices to show
$\clB\equiv\set{\clN(K)}{\mbox{$K$ is compact}}$ and
$\clB'\equiv\set{\clN(\onm{cl}U)}{U\in\clB_{00}}$ are equivalent subbases, the
first being the defining subbase of the upper co-compact topology and the
second being countable. Indeed, $\clB'\subseteq\clB$, while if $K$ is compact
and $A\in\clN(K)$, then, using regularity of $\clX$ and $A$ is closed, choose
finitely many $U_1,\ldots,U_n\in\clB_{00}$ that cover $K$ with closures
contained in $\clX\setminus A$, from which
$A\in\clN(\onm{cl}U_1)\cap\ldots\cap\clN(\onm{cl}U_n)\subseteq\clN(K)$.
\end{proof}

Finding a hypertopology that fits the
criteria~\eqref{eq:evaluation-convergence-3} means a precise understanding of
the convergence defined by those. That can be expressed in terms of Kuratowski
limits:

\begin{lemma}
\label{lm:geometric-hypertopology-convergence}
Let $\clX$  be a topological space.
\begin{enumerate}\alphalist
  \item
    If $X$ is regular and $A_\lambda\to A$ (upper Vietoris) then
    $\onm{cl}A\supseteq\onm{Ls} A_\lambda$.
  \item
    If $\clX$ is locally compact and $A_\lambda\to A$ (upper co-compact) then
    $\onm{cl}A\supseteq\onm{Ls} A_\lambda$.
  \item
    If $\clX$ is compact and $A\supseteq\onm{Ls} A_\lambda$ then $A_\lambda\to
    A$ (upper Vietoris).
  \item
    If $A\supseteq\onm{Ls} A_\lambda$ then $A_\lambda\to A$ (upper co-compact).
  \item
    $A_\lambda\to A$ (lower Vietoris) if and only if 
    $A\subseteq\onm{Li} A_\lambda$.
  \item
    If $\clX$ is Hausdorff and $A\subseteq\onm{Li} A_\lambda$ then
    $A_\lambda\to A$ (lower co-compact).
  \item
    If $\clX$ is compact and $A_\lambda\to A$ (lower co-compact) then
    $A\subseteq\onm{Li} A_\lambda$.
\end{enumerate}
\end{lemma}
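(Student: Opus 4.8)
The plan is to reduce everything to the standard fact (recalled in the appendix) that a net converges in a topology generated by a subbase if and only if it is eventually inside every \emph{subbasic} neighbourhood of the limit, since ``finally'' is preserved under finite intersection. Thus $A_\lambda\to A$ in the lower [upper] Vietoris topology means precisely that $A_\lambda\cap U\ne\emptyset$ finally whenever $U$ is open and $A\cap U\ne\emptyset$ [that $A_\lambda\subseteq U$ finally whenever $U$ is open and $A\subseteq U$], and similarly for the co-compact topologies with $U$ co-compact, equivalently $A_\lambda\cap K=\emptyset$ finally whenever $K$ is compact and $A\cap K=\emptyset$ for the upper co-compact case. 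With this dictionary the seven claims split into the ``lower'' statements (e)--(g), which concern $\onm{Li}$, and the ``upper'' statements (a)--(d), which concern $\onm{Ls}$; the recurring tool throughout is Proposition~\ref{pp:limit-points-and-limits}(a), converting ``cluster point'' into ``a subnet along which a selected net converges''.

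For the lower statements I would start with the equivalence~(e), which is immediate from the definition of a limit point: if $A_\lambda\to A$ (lower Vietoris) and $x\in A$, then every open $U\ni x$ meets $A$, hence $A_\lambda\cap U\ne\emptyset$ finally, so $x\in\onm{Li}A_\lambda$; conversely, given $A\subseteq\onm{Li}A_\lambda$ and an open $U$ with $A\cap U\ne\emptyset$, any $x\in A\cap U$ is a limit point and forces $A_\lambda\cap U\ne\emptyset$ finally. Statements~(f) and~(g) are then variations of the same argument in which the test set $U$ is co-compact rather than open: for~(g) I would use that in a compact $\clX$ every open set is co-compact (its complement is closed, hence compact), so the lower co-compact subbase contains every lower Vietoris hit set and the limit-point conclusion follows verbatim; for~(f) I would use that in a Hausdorff $\clX$ a co-compact set is open (compact sets are closed), so a co-compact $U$ with $x\in A\cap U$ is itself an admissible neighbourhood of the limit point $x$, giving $A_\lambda\cap U\ne\emptyset$ finally.

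For the upper statements (a)--(b) I would argue by contraposition, showing that a point $x\notin\onm{cl}A$ is not a cluster point. The idea is to produce, from $x$, an upper neighbourhood of $A$ that eventually pushes $A_\lambda$ out of a neighbourhood of $x$. In~(a), regularity separates $x$ from the closed set $\onm{cl}A$ by disjoint open $U\ni x$ and $V\supseteq A$; upper Vietoris convergence gives $A_\lambda\subseteq V$ finally, so $A_\lambda\cap U=\emptyset$ finally and $x\notin\onm{Ls}A_\lambda$. In~(b), local compactness supplies a compact neighbourhood $K$ of $x$ contained in the open set $\clX\setminus\onm{cl}A$; then $A\cap K=\emptyset$, upper co-compact convergence gives $A_\lambda\cap K=\emptyset$ finally, and since $K$ is a neighbourhood of $x$ again $x\notin\onm{Ls}A_\lambda$. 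For (c)--(d) I would argue by contradiction: if convergence fails there is a witnessing set that $A_\lambda$ meets cofinally, namely $\clX\setminus U$ (closed, hence compact because $\clX$ is compact) in~(c) and the test compact set $K$ itself in~(d). Selecting points of the intersection along the cofinal index set yields a net in a \emph{compact} set, which therefore has a cluster point $x$; by Proposition~\ref{pp:limit-points-and-limits}(a) that $x$ lies in $\onm{Ls}A_\lambda\subseteq A$, yet $x$ also lies in the set $A$ was assumed to avoid, a contradiction.

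The main obstacle, and the place where I would be most careful, is bookkeeping exactly which point-set hypothesis each part requires and confirming it is both used and sufficient. The cluster-point extraction in (c)--(d) is the crux: it needs the net of selected points to live in a compact set, which is precisely why~(d) is hypothesis-free (the test set $K$ is already compact) while~(c) must assume $\clX$ compact (to make the closed complement $\clX\setminus U$ compact). Dually, the separation and shrinking steps in (a)--(b) and the ``open versus co-compact'' comparisons in (f)--(g) are where regularity, local compactness, Hausdorffness and compactness respectively do their work, and obtaining a compact neighbourhood inside a prescribed open set in~(b) is the one step I would want to justify with care.
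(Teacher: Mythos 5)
Your proposal is correct and matches the paper's own proof in all essentials: (e)--(g) via the definition of limit point together with the Hausdorff/compact comparison of open and co-compact sets, (a)--(b) via regularity/local compactness separating a point outside $\onm{cl}A$ from $A$, and (c)--(d) by selecting points in the compact witnessing set ($\clX\setminus U$ or $K$) along the directed cofinal index set and extracting a convergent subnet to contradict $\onm{Ls}A_\lambda\subseteq A$ via Proposition~\ref{pp:limit-points-and-limits}(a). Even the step you flag as delicate in~(b)\,---\,finding a compact neighbourhood of $x$ inside $\clX\setminus\onm{cl}A$\,---\,is asserted in exactly the same form by the paper.
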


\begin{proof}
(a)
Suppose $A_\lambda\to A$ (upper Vietoris), $x$ is a cluster point of
$A_\lambda$, $U\ni x$ is open, and $V\supseteq\onm{cl}A$. By regularity, it
suffices to show $U\cap V\ne\emptyset$. Choose $\lambda^*$ such that
$A_\lambda\subseteq V$ for $\lambda\ge\lambda^*$, choose $\lambda\ge\lambda^*$
such that $A_\lambda\cap U\ne\emptyset$, and note that $U\cap V\supseteq
A_\lambda\cap U$.

(b)
Suppose $A_\lambda\to A$ (upper co-compact). Since $\clX$ is locally compact,
if $x\not\in\onm{cl}A$ then there is a compact neighbourhood $U$ of $x$ such
that $A\cap U=\emptyset$.  So there is a $\lambda^*$ such that $A_\lambda\cap
U=\emptyset$ for all $\lambda\ge\lambda^*$ i.e.\ $x$ is not a cluster point of
$A_\lambda$.

(c)
Suppose $A_\lambda\not\to A$ (upper Vietoris) i.e. there is an open $U$ such
that $A\subseteq U$, and, for all $\lambda^*$ there is $\lambda$ such that
$\lambda\ge\lambda^*$ and $A_\lambda\not\subseteq U$. The set
$\Lambda'=\set{\lambda\in\Lambda}{\mbox{$A_\lambda\not\subseteq U$}}$ is
directed. For each $\lambda\in\Lambda'$ choose $x_\lambda\in A_\lambda$ such
that $x_\lambda\not\in U$. Since $\clX$ is compact, a subnet $x_{\lambda_\mu}$
converges, say to $x$; by Proposition~\ref{pp:limit-points-and-limits}(a)
$x\in\onm{Ls} A_\lambda$. But $x\not\in U$ since $\clX\setminus U$ is closed
and $x_{\lambda_\mu}\not\in U$ from which $x\not\in A$ since $A\subseteq U$.

(d)
Suppose $A_\lambda\not\to A$ (upper co-compact) i.e.\ there is a compact $K$
such that $A\cap K=\emptyset$, and, for all $\lambda^*$ there is $\lambda$ such
that $\lambda\ge\lambda^*$ and $A_\lambda\cap K\ne\emptyset$. The set
$\Lambda'=\set{\lambda\in\Lambda}{\mbox{$A_\lambda\cap K\ne\emptyset$}}$ is
directed. For each $\lambda\in\Lambda'$ choose $x_\lambda\in A_\lambda$ such
that $x_\lambda\in K$. A subnet $x_{\lambda_\mu}$ converges, say to $x\in K$.
So $x$ is an cluster point of $A_{\lambda}$ and $x\not\in A$, from which
$A\not\supseteq\onm{Ls} A_\lambda$.

(e)
If $A_\lambda\to A$ (lower Vietoris) and $x\in A$ and $U\ni x$ is open then
$U\cap A\ne\emptyset$ and there is a $\lambda^*$ such that $U\cap
A_\lambda\ne\emptyset$ for all $\lambda\ge\lambda^*$, so $x$ is a limit
point. The converse is similar.

(f,g)
By~(e), $A_\lambda\to A$ in the lower Vietoris topology. In a Hausdorff space,
every co-compact set is open, so the lower Vietoris topology is finer than the
co-compact topology and therefore $A_\lambda\to A$ in the co-compact topology,
while if $\clX$ is compact there every every open set is co-compact.
\end{proof}

\begin{lemma}
\label{lm:order-theoretic-properties-limits}
Let $A_\lambda$ be a net of subsets of $\clX$.
\begin{enumerate}\alphalist
  \item
    If $A_{\lambda_\mu}$ is a subnet of $A_\lambda$ then
    $\onm{Li} A_{\lambda_\mu}\supseteq\onm{Li} A_\lambda$.
  \item
    If $A=\onm{Li} A_{\lambda_\mu}$ is maximal in the lower limit sets of
    $A_\lambda$ then $A_{\lambda_\mu}\to A$ in the upper co-compact topology.
\end{enumerate}
\end{lemma}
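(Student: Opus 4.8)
For part (a), the plan is to unwind the definition of the lower closed limit directly. Suppose $x\in\onm{Li}A_\lambda$ and let $U\ni x$ be open. By definition there is a $\lambda^*$ with $A_\lambda\cap U\ne\emptyset$ for all $\lambda\ge\lambda^*$. Because $A_{\lambda_\mu}$ is a subnet, cofinality of the index map supplies a $\mu^*$ with $\lambda_\mu\ge\lambda^*$ whenever $\mu\ge\mu^*$; for all such $\mu$ one then has $A_{\lambda_\mu}\cap U\ne\emptyset$. Hence $x$ is a limit point of $A_{\lambda_\mu}$, which gives the claimed inclusion. No obstacle is expected here.

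For part (b), the strategy is to reduce to Lemma~\ref{lm:geometric-hypertopology-convergence}(d), which states that $A\supseteq\onm{Ls}A_{\lambda_\mu}$ already forces $A_{\lambda_\mu}\to A$ in the upper co-compact topology. Since $\onm{Li}A_{\lambda_\mu}\subseteq\onm{Ls}A_{\lambda_\mu}$ always and $A=\onm{Li}A_{\lambda_\mu}$ by hypothesis, it suffices to prove the reverse inclusion $\onm{Ls}A_{\lambda_\mu}\subseteq A$. So I would fix a cluster point $x\in\onm{Ls}A_{\lambda_\mu}$ and aim to show $x\in A$.

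The mechanism for promoting such a cluster point into $A$ is to pass to a further subnet and then invoke maximality. By Proposition~\ref{pp:limit-points-and-limits}(a) there is a subnet $A_{\lambda_{\mu_\nu}}$ and a net $x_\nu\in A_{\lambda_{\mu_\nu}}$ with $x_\nu\to x$. This convergent selection makes $x$ a limit point of that subnet: for any open $U\ni x$ one has $x_\nu\in U$ finally, so $A_{\lambda_{\mu_\nu}}\cap U\ne\emptyset$ finally, i.e.\ $x\in\onm{Li}A_{\lambda_{\mu_\nu}}$. Now $A_{\lambda_{\mu_\nu}}$ is itself a subnet of the original $A_\lambda$, so $\onm{Li}A_{\lambda_{\mu_\nu}}$ is one of the lower limit sets of $A_\lambda$; and by part~(a), applied to the subnet relation between $A_{\lambda_{\mu_\nu}}$ and $A_{\lambda_\mu}$, it contains $\onm{Li}A_{\lambda_\mu}=A$. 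Since $A$ was assumed maximal among the lower limit sets of $A_\lambda$, the inclusion $\onm{Li}A_{\lambda_{\mu_\nu}}\supseteq A$ forces $\onm{Li}A_{\lambda_{\mu_\nu}}=A$. Therefore $x\in A$, which establishes $\onm{Ls}A_{\lambda_\mu}\subseteq A$ and hence, by Lemma~\ref{lm:geometric-hypertopology-convergence}(d), the desired upper co-compact convergence.

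The main obstacle, such as it is, lies in correctly chaining the subnet relation with the maximality hypothesis: one must observe that the sub-subnet $A_{\lambda_{\mu_\nu}}$ is still a subnet of $A_\lambda$\,---\,so that its lower limit is an admissible candidate for the maximality comparison\,---\,and that part~(a) furnishes precisely the inclusion $\onm{Li}A_{\lambda_{\mu_\nu}}\supseteq A$ that triggers maximality. Once that bookkeeping is settled, the remaining steps are routine applications of the definitions and the cited results.
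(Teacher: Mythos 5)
Your proof is correct, and for part~(b) it takes a recognizably different route from the paper's, even though both hinge on the same maximality trigger. The paper argues by contradiction entirely within the definition of the upper co-compact topology: assuming $A_{\lambda_\mu}\not\to A$, it produces a compact $K$ with $K\cap A=\emptyset$ that is hit cofinally, uses compactness of $K$ to extract a convergent selection $x_\mu\in A_{\lambda_\mu}\cap K$, and then observes that the lower limit of the corresponding sub-subnet contains both $A$ and the limit $x\notin A$, contradicting maximality. You instead prove the inclusion $\onm{Ls}A_{\lambda_\mu}\subseteq A$ directly\,---\,promoting an arbitrary cluster point into $A$ via Proposition~\ref{pp:limit-points-and-limits}(a), part~(a) of the lemma, and maximality\,---\,and then delegate the passage from $\onm{Ls}A_{\lambda_\mu}\subseteq A$ to upper co-compact convergence to Lemma~\ref{lm:geometric-hypertopology-convergence}(d). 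This is more modular: the compact-set extraction is not redone inline but reused from the earlier lemma (where it was proved by exactly the paper's kind of argument), and your version establishes the slightly stronger intermediate fact $\onm{Ls}A_{\lambda_\mu}=\onm{Li}A_{\lambda_\mu}=A$, i.e.\ Kuratowski-Painlev\'e convergence of the subnet, which is what Theorem~\ref{th:fell-topology-properties}(a) ultimately wants anyway. The paper's inline argument, by contrast, is self-contained and needs only the single compact set witnessing non-convergence rather than a quantification over all cluster points. Your bookkeeping points\,---\,that the sub-subnet is again a subnet of $A_\lambda$ (so its lower limit is an admissible competitor for maximality) and that part~(a) supplies $\onm{Li}A_{\lambda_{\mu_\nu}}\supseteq A$\,---\,are exactly the facts needed, and both hold by routine composition of monotone cofinal maps.
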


\begin{proof}
(a)
If $A_{\lambda_\mu}$ is a subnet of $A_\lambda$ then $x\in\onm{Li} A_\lambda$
and $U\ni x$ is open then choose $\lambda^*$ such that $A_\lambda\cap           
U\ne\emptyset$ for all $\lambda\ge\lambda^*$. Choose $\mu^*$ so that
$\lambda_{\mu^*}\ge\lambda^*$.  Then $\mu\ge\mu^*$ implies
$\lambda_\mu\ge\lambda^*$ and $A_{\lambda_\mu}\cap U\ne\emptyset$.

(b)
Suppose $A$ is maximal. If $A_{\lambda_\mu}\not\to A$ (upper co-compact) then
there is a compact set $K$ such that $K\cap A=\emptyset$ and such that, for all
$\mu^*$ there is a $\mu>\mu^*$ such that $A_{\lambda_\mu}\cap
K\ne\emptyset$. The set $\set{\mu}{A_{\lambda_\mu}\cap K\ne\emptyset}$ is
directed; choose $x_\mu\in A_{\lambda_\mu}\cap K$ for each such $\mu$. Since
$K$ is compact, a subnet of $x_\mu$ converges, say to $x\in K$.  The limit
inferior of the corresponding subnet of $A_{\lambda_\mu}$ contains $A$ and $x$,
contradicting maximality of $A$ since $A\cap K=\emptyset$.
\end{proof}

\subsection{The Fell topology}
\label{section:fell-topology}

An emphasized in the Appendix, any convergence criteria that respects
subnets does generate a topology: the finest topology such that every net
that satisfies the criteria converges. A convergence is called
\emph{topological} on the equivalence of the apriori criteria and the
convergence in the generated topology. Otherwise, there may be nets that
converge in the generated topology that do not satisfy such apriori
criteria. Such a condition may manifest a logical inequivalence of convergence
statements: convergence in the criteria implies convergence in the topology,
but the converse of that involves some additional context or conditions.

This is further complicated because the generated topology is usually
abstract\,---\,it is an unknown that one is seeking to identify. In that case,
a putative topology may deviate in both directions of logical implication. One
strategy to deal with this is to pair implications that appear to be almost
equivalent and then take the union of their logical predicates, hoping that is
not so restrictive as to eliminate the target application. In case of success,
one arrives at a viable context in which the criteria are matched to a
topology. This is not an exact science. One is sorting through logical
implications that may not be the best. Nothing prevents further work from
establishing a wider context that matches a given convergence criteria to a
larger class of topologies.

Following these ideas, Lemma~\ref{lm:geometric-hypertopology-convergence} seems
to diminish the upper Vietoris topology or lower co-compact topologies, because
Lemma~\ref{lm:geometric-hypertopology-convergence}(c) and
Lemma~\ref{lm:geometric-hypertopology-convergence}(g) depend on compactness
actual.  While it may be reasonable to assume that the ambient spaces of
numerical analysis are locally compact, they are  not usually compact.
In passing, those hypertopologies seem somewhat pathological anyway. For
example,~if $\clX=\bbR^2$ and $A_n=\bbR\times\sset{1/n}$ then $A_n$ does not
converge in the upper Vietoris topology to $\bbR\times\sset{0}$ because $A_n$
is not contained in any neighbourhood of the form $\set{(x,y)}{-1/x<y<1/x}$,
while $A_n$ converges to any bounded set in the lower co-compact topology since
it meets the complement of any compact set.

The condition $A\subseteq\onm{Li} A_\lambda$ already topological to the lower
Vietoris topology, by
Lemma~\ref{lm:geometric-hypertopology-convergence}(e). Aligning
Lemma~\ref{lm:geometric-hypertopology-convergence}(b) and
Lemma~\ref{lm:geometric-hypertopology-convergence}(d), the
condition $A\supseteq\onm{Ls} A_\lambda$ is topological in the context of
closed sets on a locally compact Hausdorff space. By
Theorem~\ref{th:convergence-topology-lattice}(c), the combination of
$A\subseteq\onm{Li} A_\lambda$ and $A\supseteq\onm{Ls} A_\lambda$,
i.e.~Kuratowski-Painlev\'e convergence, is topological to the join of the lower
Vietoris and upper co-compact topologies. In the case that $\clX$ is also second
countable, both the lower Vietoris topology and upper co-compact topologies are
second countable, as is their join, and then sequences suffice, with
convergence the extremely compelling~\eqref{eq:KP-sequential-convergence-final}.

\begin{definition}\label{df:fell-topology}
The \emph{Fell topology} is the join of the upper co-compact topology and the
lower Vietoris topology. $\onm{Fell}(\clX)$ is the set of closed subsets of
$\clX$ with the Fell topology.
\end{definition}

The Fell topology first occurs in~\cite{ChabautyC-1950-1}
and~\cite{FellJMG-1962-1}; see also~\cite{BeerG-1993-1}. The nomenclature is
not standardized. The term Chabauty-Fell is suggested
in~\cite{delaHarpeP-2008-1}. \cite{MatsuzakiK-2017-1} calls the lower Vietoris
topology the Thurston topology, and, with
\cite{CanaryRD-EpsteinDBA-GreenPL-2006-1}, the Fell topology is called the
Chabauty topology, but the latter also use the term geometric topology. The
notation $\onm{Fell}(\clX)$ is not standard.

\begin{theorem}
\label{th:fell-topology-properties}\mbox{}
\begin{enumerate}\alphalist
  \item
    $\onm{Fell}(\clX)$ is compact (for any $\clX$).
  \item
    If $\clX$ is locally compact then $\onm{Fell}(\clX)$ is Hausdorff and
    $A_\lambda\to A$ if and only if $A_\lambda\to A$ (Kuratowski-Painlev\'e).
  \item
    If $\clX$ is locally compact, Hausdorff, and second countable then
    $\onm{Fell}(\clX)$ is second countable.
\end{enumerate}
\end{theorem}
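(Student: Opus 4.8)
The three parts rest on different machinery, so the plan is to treat them separately, concentrating the work on~(a). For~(a) the plan is to apply the Alexander subbase lemma, reducing the problem to extracting a finite subcover from an arbitrary cover of $\onm{Fell}(\clX)$ by subbasic sets. These subbasic sets are the ``hit'' sets $\set{A}{A\cap U\ne\emptyset}$ over open $U$ (lower Vietoris) and the ``miss'' sets $\set{A}{A\cap K=\emptyset}$ over compact $K$ (upper co-compact). Given such a cover, indexed by opens $U_i$ and compacts $K_j$, I would single out the closed set $F=\clX\setminus\bigcup_i U_i$, which is a point of $\onm{Fell}(\clX)$ meeting no $U_i$ and hence lying in no hit set; therefore it must lie in some miss set, yielding a compact $K_{j_0}$ with $F\cap K_{j_0}=\emptyset$. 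The one substantive step is to observe that this forces $K_{j_0}\subseteq\bigcup_i U_i$, whereupon compactness of $K_{j_0}$ supplies finitely many $U_{i_1},\dots,U_{i_m}$ covering it. I would then check that the miss set $\set{A}{A\cap K_{j_0}=\emptyset}$ together with the hit sets $\set{A}{A\cap U_{i_\ell}\ne\emptyset}$ is the required finite subcover: a closed $A$ either misses $K_{j_0}$ or contains a point of $K_{j_0}$, which lies in some $U_{i_\ell}$ and so is hit. This argument needs no separation or local compactness hypothesis, matching the claim ``for any $\clX$''.

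For~(b) I would first pin down Fell net convergence. Since the Fell topology is a join, a net converges in it precisely when it converges in each factor (Theorem~\ref{th:convergence-topology-lattice}(c)). Lemma~\ref{lm:geometric-hypertopology-convergence}(e) gives $A_\lambda\to A$ (lower Vietoris) iff $A\subseteq\onm{Li}A_\lambda$, while Lemma~\ref{lm:geometric-hypertopology-convergence}(b),(d), specialized to a closed $A$ (so $\onm{cl}A=A$) on a locally compact $\clX$, give $A_\lambda\to A$ (upper co-compact) iff $A\supseteq\onm{Ls}A_\lambda$. Combining, Fell convergence is exactly $A\subseteq\onm{Li}A_\lambda$ together with $A\supseteq\onm{Ls}A_\lambda$, which is Kuratowski-Painlev\'e convergence. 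Hausdorffness is then immediate: $\onm{Li}A_\lambda$ and $\onm{Ls}A_\lambda$ depend only on the net, so a Kuratowski-Painlev\'e limit is unique whenever it exists; hence Fell limits of nets are unique, and a space in which every net has at most one limit is Hausdorff.

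For~(c) I would reuse the countable subbases built in the proof of Theorem~\ref{th:hypertopology-countable}: second countability of $\clX$ yields a countable subbasis for the lower Vietoris topology, and second countable plus locally compact Hausdorff yields one for the upper co-compact topology. Their union is a countable subbasis for the join, so its finite intersections form a countable basis; restricting to the subspace of closed sets preserves second countability, giving the claim. I expect the crux of the whole theorem to be the single observation in~(a) that covering $F$ by a miss set localizes one compact $K_{j_0}$ inside the union of the hit opens, converting a subbase-cover problem into the ordinary compactness of $K_{j_0}$; parts~(b) and~(c) are essentially bookkeeping over the lemmas already in hand.
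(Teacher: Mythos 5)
Your proof is correct, and part~(a) is where it genuinely departs from the paper. The paper proves compactness entirely inside its net-convergence machinery: given a net of closed sets, it applies Zorn's lemma to chains of Kuratowski lower limits of subnets, extracts a subnet whose limit inferior $A$ is maximal, and then combines Lemma~\ref{lm:order-theoretic-properties-limits}(b) (upper co-compact convergence to $A$) with Lemma~\ref{lm:geometric-hypertopology-convergence}(e) (lower Vietoris convergence, since $A$ equals the limit inferior) to produce a Fell-convergent subnet, invoking the convergent-subnet characterization of compactness. You instead run the classical hit-and-miss covering argument through the Alexander subbase lemma, and your key step is sound: $F=\clX\setminus\bigcup_iU_i$ is a point of $\onm{Fell}(\clX)$ lying in no hit set, so some miss set supplies a compact $K_{j_0}$ with $F\cap K_{j_0}=\emptyset$, i.e.\ $K_{j_0}\subseteq\bigcup_iU_i$, and ordinary compactness of $K_{j_0}$ yields the finite subcover (a closed $A$ either misses $K_{j_0}$ or hits one of the finitely many $U_{i_\ell}$). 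Both routes use choice, but yours is shorter, produces an explicit finite subcover, makes transparent why no hypothesis on $\clX$ is needed, and sidesteps a delicate point in the paper's argument: as literally written, the coproduct ordering on $\bigvee M_\alpha$ makes pairs with distinct $\alpha$ incomparable, so that index set is not directed and the ``subnet'' requires repair. What the paper's route buys is coherence with its theme of generating topology from convergence criteria, and it exhibits a Kuratowski-convergent subnet, which dovetails with part~(b). Your parts~(b) and~(c) essentially coincide with the paper's proofs: Fell limits are Kuratowski-Painlev\'e limits (Lemma~\ref{lm:geometric-hypertopology-convergence}(b,d,e) plus closedness of $A$), hence unique, hence $\onm{Fell}(\clX)$ is Hausdorff; and the countable families of Theorem~\ref{th:hypertopology-countable} combine to give second countability. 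Two minor notes: the fact that a net converges in a join topology if and only if it converges in each factor is best cited to Lemma~\ref{lm:subbase-convergence} (the union of the two topologies is a subbase for the join), since Theorem~\ref{th:convergence-topology-lattice}(c) concerns topologies generated by convergence criteria rather than net convergence in a given join; and your observation in~(c) that the union of the two countable families is only a \emph{subbase}, whose finite intersections form the countable base, is in fact more careful than the paper's own wording there.
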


\begin{proof}
(a)
Suppose $A_\alpha=\onm{Li} f_\alpha$ is a (set theoretic) chain of lower limit
sets, where $f_\alpha\colon M_\alpha\to\Lambda$. The co-product 
$M\equiv\bigvee M_\alpha=\set{(\alpha,\mu)}{\mu\in M_\alpha}$ is ordered by
$(\alpha_1,\mu_1)\ge(\alpha_2,\mu_2)$ if $\alpha_1=\alpha_2$ and
$\mu_1\ge\mu_2$ ( $(\alpha_1,\mu_1)$ and $(\alpha_2,\mu_2)$ are incomparable if
$\alpha_1\ne\alpha_2$). With this ordering, $(\alpha,\mu)\mapsto f_\alpha(\mu)$
is a subnet of $A_\lambda$ and $\onm{Li}f\supseteq\bigcup_\alpha A_\alpha$.
Thus every chain has an upper bound, Zorn's lemma provides a maximal lower
limit set, and there is a convergent subnet of $A_\lambda$
by Lemma~\ref{lm:order-theoretic-properties-limits}(b).

(b)
If $A_\lambda\to A$ in $\onm{Fell}(\clX)$ then $A_\lambda\to A$ in both the
upper co-compact and the lower Vietoris topology. By
Lemma~\ref{lm:geometric-hypertopology-convergence},
$\onm{Li}A_\lambda=A=\onm{Ls}A_\lambda$, which establishes that limits are
unique.  If $\clX$ is locally compact then by
Lemma~\ref{lm:geometric-hypertopology-convergence} the upper
co-compact and lower Vietoris topologies separately correspond to the
convergence criteria $A\supseteq\onm{Ls A_\lambda}$ and 
$\onm{Ls A_\lambda}\subseteq A$, respectively. Because of that, the join of
those topologies corresponds to the logical ``and'' of those criteria, and that
by \eqref{eq:KP-sequential-convergence-final} is equivalent to Kuratowski-Painlev\'e
convergence.

(c)
If $\clX$ is second countable locally compact Hausdorff
then Theorem~\ref{th:hypertopology-countable} provides countable bases for both
the lower Vietoris and upper co-compact topologies, and the union of those is a
countable basis for the Fell topology.
\end{proof}

\begin{figure}[t]
\hfil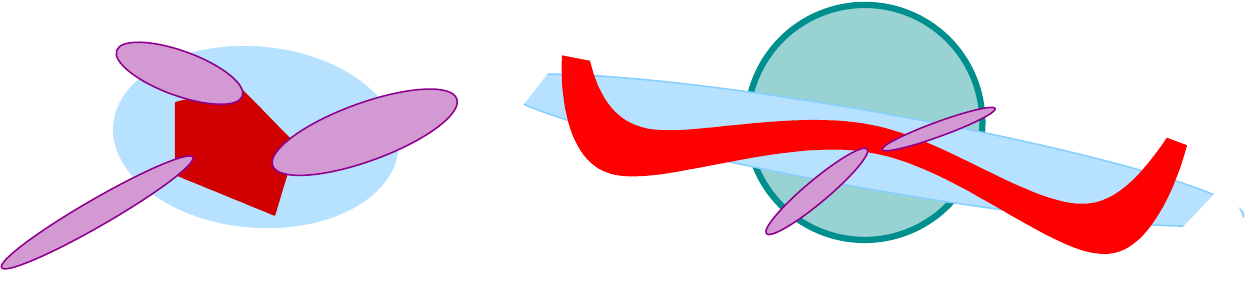\hfil
\renewcommand{\baselinestretch}{.95}
\caption{
A neighbourhood base of a compact set $A$ is pictured top at left. A subset is
inside such a neighbourhood if it is contained in $U$ and makes contact with
each $V_i$. If the set is not compact, pictured at right, then the same has to
occur just inside a compact set $K$, which is a part of the neighbourhood's
definition (the neighbourhood exerts control only within $K$). The effect on
convergence of $A_i$ is that membership in such a neighbourhood must be
established finally, say for some $i\ge N$, but larger $K$ require larger~$N$.
}
\end{figure}

\begin{theorem}
\label{th:fell-topology-base}
\mbox{}
\begin{enumerate}\alphalist
\item
  The collection
  $\set{A\subseteq\clX}{\mbox{$A\cap K\subseteq U$ and $A\cap V_i\ne\emptyset$
  for all $i$}}$
  over all open $U,V_1,\ldots,V_n\subseteq\clX$ and compact $K\subseteq\clX$,
  is a base for $\onm{Fell}(\clX)$.
\item
  Suppose $\clX$ is locally compact and Hausdorff, and let $B\subseteq\clX$ be
  compact subset.  Then the collection
  $\set{A\subseteq\clX}{\mbox{$B\subseteq A$ and 
  $A\subseteq U$ and $A\cap V_i\ne\emptyset$ for all $i$}}$ over all
  relatively compact open $U$, all open subsets $V_1,\ldots,V_n$ is a
  neighbourhood base of $\onm{Fell}(\clX)$ (at $B$).
\end{enumerate}
\end{theorem}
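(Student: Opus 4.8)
The plan is to read off (a) directly from the defining subbasis of $\onm{Fell}(\clX)$, and then to obtain (b) by specializing the base from (a) at the point $B$ and refining its parameters using local compactness. Throughout I will use the miss form of the upper co-compact subbasis, $\set{A}{A\cap K=\emptyset}$ over compact $K$, together with the lower Vietoris hit-sets $\set{A}{A\cap V\ne\emptyset}$ over open $V$, which by Definition~\ref{df:fell-topology} jointly subbasically generate the Fell topology.

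For (a), a base for a join topology is given by finite intersections of subbasic sets. Finitely many hit conditions persist as the conjunction $A\cap V_i\ne\emptyset$, while finitely many miss conditions $A\cap K_j=\emptyset$ coalesce into the single condition $A\cap K=\emptyset$ with $K=\bigcup_j K_j$ compact. Hence a typical basic set is $\set{A}{A\cap K=\emptyset\ \text{and}\ A\cap V_i\ne\emptyset}$. To match the stated collection I would invoke the identity $A\cap K\subseteq U\iff A\cap(K\setminus U)=\emptyset$, noting that $K\setminus U=K\cap(\clX\setminus U)$ is a closed subset of the compact set $K$, hence compact. Read left to right this shows every stated set is one of the canonical basic sets, so it is open; read right to left with $U=\emptyset$ (and with $U=\clX$, $n=0$ giving the vacuous constraints) it shows every canonical basic set is a stated set. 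The two collections therefore coincide, and since the canonical one is a base for the join, so is the stated one.

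For (b) I would first specialize the base of part (a): the basic neighbourhoods of $B$ are the sets $\set{A}{A\cap K\subseteq U\ \text{and}\ A\cap V_i\ne\emptyset}$ with $B\cap K\subseteq U$ and $B\cap V_i\ne\emptyset$. The lower half of the refinement is the condition $B\subseteq\onm{Li}A_\lambda$ of Lemma~\ref{lm:geometric-hypertopology-convergence}(e): since $B$ is compact, finitely many hit-sets $V_i$ meeting $B$, drawn from any prescribed open cover of $B$, realize it, and the requirement $B\subseteq A$ forces each $A\cap V_i\supseteq B\cap V_i\ne\emptyset$. For cofinality I would start from a basic neighbourhood in miss form, $\set{A}{A\cap K=\emptyset\ \text{and}\ A\cap V_i\ne\emptyset}$ with $B\cap K=\emptyset$; because $\clX$ is locally compact Hausdorff and the compact sets $B$ and $K$ are disjoint, I can separate them by a relatively compact open $U$ with $B\subseteq U$ and $\onm{cl}U\cap K=\emptyset$, so that $A\subseteq U$ forces $A\cap K=\emptyset$. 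The stated set for this $U$ and these $V_i$ then lies inside the given neighbourhood and still contains $B$, giving cofinality.

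The step I expect to be the main obstacle is the reverse direction: showing that each stated set is \emph{genuinely} a neighbourhood of $B$, that is, exhibiting a Fell-open set containing $B$ and contained in the stated set. This is delicate precisely because, in a non-compact $\clX$, the upper co-compact topology exerts control only on compact regions and cannot by itself enforce the global containment $A\subseteq U$ for a merely relatively compact $U$; far-away pieces of a closed set are invisible to every miss-set that meets a neighbourhood of $B$. The natural attempt is to interpose $B\subseteq W\subseteq\onm{cl}W\subseteq U$ with $W$ relatively compact open and to use the Fell-open set $\set{A}{A\cap(\onm{cl}U\setminus W)=\emptyset\ \text{and}\ A\cap V_i\ne\emptyset}$, which contains $B$; the crux is to verify that this set is actually contained in the stated one, and to track whether the compact control set $\onm{cl}U$ can fully absorb the role of the general compact $K$ from part (a) or must be retained. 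Pinning down this containment, and thereby confirming that the refined family remains cofinal in the neighbourhood filter at $B$, is where I expect the real work of the proof to lie.
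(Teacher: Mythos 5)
Your part (a) is correct and is essentially the paper's own argument: the paper rewrites $A\cap K\subseteq U$ as $A\cap(K\cap L)=\emptyset$ with $L=\clX\setminus U$ closed, notes that $K\cap L$ is compact, and recovers the canonical basic sets of the join by taking $L=\clX$; your $K\setminus U$ computation is the same proof in different notation. Your cofinality argument in (b) also matches the paper's second step: the paper separates the disjoint compact sets $B$ and $K\setminus U$ by disjoint relatively compact open sets $U'\supseteq B$ and $U''\supseteq K\setminus U$ and checks that $A\subseteq U'$ forces $A\cap K\subseteq U$; you do the same after first putting the basic set in miss form.

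The step you left open---that each stated set in (b) is genuinely a Fell-neighbourhood of $B$---is exactly where the paper's proof fails, and your diagnosis is correct. The paper disposes of this step by setting $K=\onm{cl}U$ and asserting that ``$A\cap K\subseteq U$ if and only if $A\subseteq U$,'' so that the stated sets would be members of the base from (a); but only the right-to-left implication is true, and the converse fails for any $A$ with points outside $\onm{cl}U$---precisely the far-away pieces you worried about. In fact, when $\clX$ is not compact, no Fell-open set containing $B$ can be contained in a stated set: given any basic set $\set{A\subseteq\clX}{\mbox{$A\cap K'=\emptyset$ and $A\cap W_j\ne\emptyset$ for all $j$}}$ containing $B$, choose $x\in\clX\setminus(K'\cup\onm{cl}U)$, which exists since $K'\cup\onm{cl}U$ is compact; then $B\cup\sset{x}$ is closed, lies in that basic set, but is not a subset of $U$. (The clause $B\subseteq A$ makes matters worse: when $B$ is infinite, finite subsets of $B$ meeting each $W_j$ also lie in every basic neighbourhood of $B$.) So the containment you proposed to verify cannot be established by any choice of interposed sets. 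What your argument, and the correct half of the paper's, actually proves is that the stated sets containing $B$ are cofinal in the neighbourhood filter of $B$---every Fell-neighbourhood of $B$ contains one of them---which is strictly weaker than being a neighbourhood base, since the sets themselves are not neighbourhoods; they are neighbourhoods essentially only when $\clX$ is compact (so that the upper co-compact and upper Vietoris topologies coincide), and even then only with the clause $B\subseteq A$ deleted. In short, the gap you identified is real but is not a defect of your approach relative to the paper's: it marks an error in the paper's own proof, and Theorem~\ref{th:fell-topology-base}(b) as stated is false for non-compact $\clX$.
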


\begin{proof}
(a)
The given collection is the same as
\begin{equation*}
\set{A\subseteq\clX}
 {\mbox{$A\cap K\cap L=\emptyset$ and $A\cap V_i\ne\emptyset$ for all $i$}}
\end{equation*}
over all closed $L$, open $V_i$, and compact~$K$. Since the intersection of a
closed set and a compact set is compact, and since one may take $L=\clX$, this
is the same as the collection
$\set{A\subseteq\clX}
  {\mbox{$A\cap K=\emptyset$ and $A\cap V_i\ne\emptyset$ for all $i$}}$,
which is a base for $\onm{Fell}(\clX)$ by definition of the join of the upper
co-compact and lower Vietoris topologies.

(b)
If $U$ is relatively compact then set $K=\onm{cl}U$ and note that 
$A\cap K\subseteq U$ if and only if $A\subseteq U$, so the collection is an
extraction from the base established in Theorem~\ref{th:fell-topology-base}(a).
Now if $U,V_i$ are open, $K$ is compact, and suppose a compact $B$ such that
$B\cap K\subset U$. Then $B$ and $K\setminus U$ are
disjoint compact sets, so there are disjoint open relatively compact
$U'\supseteq B$ and $U''\supseteq K\setminus U$. If $A\subseteq U'$ then 
$A\cap K\subseteq U'\cap K\subseteq K\cap(\clX\setminus U'')\subseteq
  K\cap\bigl(\clX\setminus(K\setminus U)\bigr)=K\cap U\subseteq U$.
\end{proof}

\section{Partial maps}
\label{section:partial-maps}

A \emph{partial function} on $\clX$ is a function with domain a (usually, but
not necessarily proper) subset of $\clX$. Since partial maps have by
definition domains which are subsets, and since some topologies on partial maps
are defined by replacing the partial maps with graphs, their topologies are
strongly related to topologies on spaces of subsets of a topological
space.

\begin{definition}
\label{df:geometric-topologies-partial-maps}
Let $\clX$ and $\clY$ be topological spaces.
\begin{enumerate}\alphalist
  \item
    The topologies on the set of partial maps from $\clX$ to $\clY$ with the
    same names
    as Definition~\ref{df:geometric-hypertopologies} are
    the corresponding topologies obtained after identifying maps with
    their graphs.
  \item
    The \emph{compact-open} topology on the set of partial maps from $\clX$ to
    $\clY$ is the topology with subbase sets of the form
    $\clN(K,U)\equiv\set{f\colon B\to\clY}{f(K)\subseteq U}$, where
    $K\subseteq\clX$ is compact and $U\subseteq\clY$ is open.
  \item
    The \emph{Back topology} is the join of the compact-open topology and
    initial topology obtained from assignment of partial maps to
    domains with the lower Vietoris topology.
\end{enumerate}
\end{definition}

The first to consider spaces of partial maps was \cite{KuratowskiK-1955-1}.  A
convergence criteria occurs even at the beginning in the following form: a
sequence $f_n$ with domains $A_n$ convergences to $f$ with domain $A$ if and
only if $A_n$ converges to $A$ in the Hausdorff distance and $x_n\rightarrow
x$, and $x\in A$ implies $f_n(x_n)\rightarrow f(x)$. The Back topology is
from~\cite{BackK-1986-1}. A recent reference for topologies on the space of
partial maps is~\cite{BeerG-CasertaA-DiMaioG-LucchettiR-2014-1}.  The term
\emph{generalized compact-open} is a synonym for the Back topology, and
\emph{compact-open} may not be as used here but rather may be synonymous with
Back topology.

If $\clX$ and $\clY$ are second countable locally compact Hausdorff spaces then
$\clX\times\clY$ is also that. Let $\onm{Fell}(\clX,\clY)$ be the set of
such with the subspace topology obtained by identifying partial maps with
their graphs. Subspaces of second countable [Hausdorff] spaces are second
countable [Hausdorff] (\cite{WillardS-1970-1}, Theorems~13.8 and~16.2 and so
$\onm{Fell}(\clX,\clY)$ is second countable and Hausdorff if $\clX$ and
$\clY$ are.

\begin{theorem}
\label{th:fell-convergence-partial-function}
A sequence $(f_i\colon A_i\to\clY)\in \onm{Fell}(\clX,\clY)$ converges
  to $(f\colon A\to\clY)\in\onm{Fell}(\clX,\clY)$ if and only if
\begin{enumerate}\alphalist
  \item
    for all strictly increasing $i_j$, if $x_j\in A_{i_j}$ and both $x_j$ and
    $f_{i_j}(x_j)$ converge, then $\lim x_j\in A$ and $f(\lim x_j)=\lim
    f_{i_j}(x_j)$; and
  \item
    for all $x\in A$ there is an $x_i\in A_i$ such that $x_i\to x$ and $f(x_i)$
    converges.
\end{enumerate}
If in addition $\clY$ is compact, then $f_i$ converges if and only if
\begin{enumerate}\alphalist\setcounter{enumi}{2}
  \item
    for all strictly increasing $i_j$, if $x_j\in A_{i_j}$ and $x_j$ converges,
    then $\lim x_j\in A$ and $f(\lim x_j)=\lim f_{i_j}(x_j)$; and
  \item
    for all $x\in A$ there is an $x_i\in A_i$ such that $x_i\to x$.
\end{enumerate}
\end{theorem}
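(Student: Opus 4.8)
The plan is to pull the statement back to Kuratowski--Painlev\'e convergence of the graphs in $\clX\times\clY$ and then read off the componentwise meaning of sequential convergence there. First I would set $G_i=\onm{graph}(f_i)$ and $G=\onm{graph}(f)$, which by hypothesis are closed subsets of the product $\clX\times\clY$; since $\clX$ and $\clY$ are second countable locally compact Hausdorff, so is $\clX\times\clY$, and in particular it is first countable. By Theorem~\ref{th:fell-topology-properties}(b) applied to $\clX\times\clY$, the Fell convergence $G_i\to G$ is equivalent to Kuratowski--Painlev\'e convergence, and by first countability the latter is equivalent to the sequential criteria~\eqref{eq:KP-sequential-convergence-final}, namely: (a$'$)~whenever $i_j$ is strictly increasing and $(x_j,y_j)\in G_{i_j}$ with $(x_j,y_j)\to(x,y)$ then $(x,y)\in G$; and (b$'$)~every $(x,y)\in G$ is the limit of some $(x_i,y_i)\in G_i$.

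Next I would translate (a$'$) and (b$'$) into (a) and (b) using that convergence in a product is componentwise and that $(x,y)\in G_i$ means exactly $x\in A_i$ and $y=f_i(x)$ (and likewise for $G$). Thus $(x_j,y_j)\to(x,y)$ unpacks as $x_j\to x$ and $f_{i_j}(x_j)\to y$ both converging, and membership $(x,y)\in G$ as $x\in A$ and $f(x)=y$; this turns (a$'$) verbatim into (a). For (b$'$), a witnessing sequence $(x_i,f_i(x_i))\to(x,f(x))$ is exactly an $x_i\in A_i$ with $x_i\to x$ and $f_i(x_i)$ convergent, and, conversely, given (a) any such convergent $f_i(x_i)$ has limit $f(x)$, so (b$'$) and (b) coincide. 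This establishes the first equivalence.

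For the compact case I would argue that, when $\clY$ is compact, (a)$\wedge$(b) and (c)$\wedge$(d) are equivalent. The implications (c)$\Rightarrow$(a) and (b)$\Rightarrow$(d) are immediate, since (c) weakens the hypothesis of (a) while keeping its conclusion and (b) strengthens the conclusion of (d); and (c) together with (d) yields (b), because applying (c) to the sequence produced by (d) (with $i_j=j$) shows $f_i(x_i)$ converges. The one substantive step is (a)$\Rightarrow$(c): given $x_j\in A_{i_j}$ with $x_j\to x$ but no convergence hypothesis on $y_j:=f_{i_j}(x_j)$, I would use compactness of $\clY$ to extract, from any subsequence, a convergent sub-subsequence $y_{j_k}\to q$; then (a) forces $x\in A$ and $q=f(x)$, so every convergent subsequence of $y_j$ has the same limit $f(x)$. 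Since $\clY$ is compact Hausdorff and second countable, hence metrizable, a sequence in it all of whose convergent subsequences share the limit $p$ must itself converge to $p$; therefore $f_{i_j}(x_j)\to f(x)$, which is precisely (c).

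The main obstacle is this last subsequence argument: it is the only place compactness of $\clY$ is genuinely used, and it is where one must combine the a priori condition (a) with a compactness-and-uniqueness-of-limits argument rather than a direct translation. Everything else is bookkeeping\,---\,identifying graphs with partial maps, invoking the product topology, and citing Theorem~\ref{th:fell-topology-properties}(b) and~\eqref{eq:KP-sequential-convergence-final}.
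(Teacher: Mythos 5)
Your proposal is correct and takes essentially the same route as the paper's own proof: identify the maps with their closed graphs, reduce Fell convergence in $\onm{Fell}(\clX,\clY)$ to the sequential Kuratowski--Painlev\'e criterion~\eqref{eq:KP-sequential-convergence-final} (via Theorem~\ref{th:fell-topology-properties} and second countability), translate componentwise, and handle the compact case by the same bookkeeping implications plus extraction of convergent subsequences. The only difference is cosmetic: for the final step of (a)$\Rightarrow$(c) the paper cites the general fact (Willard, Exercise~11D, valid in any topological space) that a sequence converges to $f(x)$ once every subsequence has a subsequence converging to $f(x)$, whereas you detour through metrizability of $\clY$ and uniqueness of subsequential limits\,---\,correct, but slightly more machinery than is needed.
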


\begin{proof}
Suppose $f_i\to f$ in $\onm{Fell}(\clX,\clY)$. If $i_j$ is strictly increasing,
$x_j\in A_{i_j}$, $x_j\to x$, and $f_{i_j}(x_j)\to y$, then
$\bigl(x_j,f_{i_j}(x_j)\bigr)\in\onm{graph} f_{i_j}$ and
$\bigl(x_j,f(x_{i_j})\bigr)\to (x,y)$, so $(x,y)\in\onm{graph}f$ and $\lim
f_{i_j}(x_j)=y=f(x)=f(\lim x_j)$. Also, if $x\in A$ then
$\bigl(x,f(x)\bigr)\in\onm{graph} f$ and there is a
$(x_i,y_i)\in\onm{graph}f_i$ such that $(x_i,y_i)\to (x,y)$, implying both
$x_i\to x$ and $f(x_i)$ converges.

Conversely, suppose~(a) and~(b).  If $i_j$ is strictly increasing and
$(x_j,y_j)\in\onm{graph}f_{i_j}$ and $(x_j,y_j)\to(x,y)$, then both $x_j$ and
$y_j=f_{i_j}(x_j)$ converge, so $x=\lim x_j\in A$ and $y=\lim
f_{i_j}(x_j)=f(\lim x_j)=f(x)$, i.e., $(x,y)\in\onm{graph}f$. If
$(x,y)\in\onm{graph}f$ then $x\in A$ and there is an $x_i\in A_i$ such that
$x_i\to x$ and $f(x_i)$ converges. By (a), $y=f(x)=f(\lim x_i)=\lim f_i(x_i)$
so $\bigl(x_i, f(x_i)\bigr)\in\onm{graph}f_i$ and $\bigl(x_i, f(x_i)\bigr)\to
(x,y)$, and $f_i\to f$ in $\onm{Fell}(\clX,\clY)$ follows
from~\eqref{eq:KP-sequential-convergence-final}.

Irrespective of whether $\clY$ is compact or not, suppose~(c) and~(d). Then~(a)
is immediate from~(c), while for~(b), if $x\in A$ then there is a $x_i\in A_i$
such that $x_i\to x$, and~(c) implies that $f(x_i)$ converges.  Conversely,
suppose~(a) and~(b). Then~(d) is immediate, while for~(c), supposing $\clY$ is
compact, let $i_j$ be strictly increasing, and let $x_j\in A_j$ be such that
$x_j\to x$. A subsequence of $f(x_{i_j})$ has a convergence subsequence, so
by~(a), $x\in A$ and that subsequence converges to $f(x)$. Since every
subsequence of $f(x_{i_j})$ has a subsequence that converges to $f(x)$,
\cite{WillardS-1970-1} (Exercise~11D) implies that $f(x_{i_j})$ converges, to
$f(x)$.
\end{proof}

The graph of $\tanh(nx)$ converges to the union
$\set{(x,1)}{x\ge 0}\cup\set{(x,-1)}{x\le 0}$, which is not the graph of a
function, so the set of graphs is not necessarily closed in the Fell topology
in the graph space. Thus $\onm{Fell}(\clX,\clY)$ is not necessarily compact
even though the Fell topology on the closed subsets is compact.

For comparison, the result for the far more well-known compact-open topology
follows. Note that the convergence criteria for the forward implication is 
essentially different from the reverse, even given local compactness.

\begin{proposition}\label{pp:compact-open-convergence}{
Let $\clX$ and $\clY$ be topological spaces, and let $f_\lambda\colon
A_\lambda\to\clY$ be a net of continuous maps.
\begin{enumerate}\alphalist
  \item
    Suppose $\clX$ be a locally compact. If $f_\lambda$ converges in the
    compact-open topology then $x_\mu\to x$, such that $x_\mu\in
    A_{\lambda_\mu}$ and $x\in A$, implies $f_{\lambda_\mu}(x_\mu)\to f(x)$.
  \item
    Suppose $\clX$ is locally compact and Hausdorff.  If $x_\mu\in
    A_{\lambda_\mu}$ and $x_\mu\to x$ implies both $x\in A$ and
    $f_{\lambda_\mu}(x_\mu)\to f(x)$, then $f_\lambda$ converges to $f$ in the
    compact-open topology.
\end{enumerate}}
\end{proposition}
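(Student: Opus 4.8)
The plan is to test net convergence in the compact-open topology against its defining subbase $\clN(K,U)=\set{g\colon B\to\clY}{g(K\cap B)\subseteq U}$, reading the image $g(K)$ of a partial map as $\set{g(x)}{x\in K\cap B}$. Throughout I would use that a net converges to $f$ in a subbase-generated topology precisely when it is finally inside every subbasic set containing $f$, finite intersections being absorbed by the directedness of the index.

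For part~(a), assume $f_\lambda\to f$ and fix a subnet with $x_\mu\in A_{\lambda_\mu}$, $x_\mu\to x\in A$; the goal is $f_{\lambda_\mu}(x_\mu)\to f(x)$. Given open $V\ni f(x)$, continuity of $f$ at $x$ furnishes an open $O\ni x$ with $f(O\cap A)\subseteq V$. This is the one place local compactness is essential: I would shrink $O$ to a compact neighbourhood with $x\in\onm{int}K\subseteq K\subseteq O$, whence $f(K\cap A)\subseteq V$ and so $f\in\clN(K,V)$. Compact-open convergence then gives $\lambda^*$ with $f_\lambda(K\cap A_\lambda)\subseteq V$ for $\lambda\ge\lambda^*$. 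Since $x_\mu\to x\in\onm{int}K$ forces $x_\mu\in K$ finally and the subnet index satisfies $\lambda_\mu\ge\lambda^*$ finally, I would conclude $f_{\lambda_\mu}(x_\mu)\in V$ finally, as required.

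For part~(b), I would argue by contradiction one subbasic neighbourhood at a time. Fix compact $K$ and open $U$ with $f\in\clN(K,U)$, i.e.\ $f(K\cap A)\subseteq U$, and suppose instead that $f_\lambda\notin\clN(K,U)$ cofinally; over that cofinal, directed set of indices choose $x_\lambda\in K\cap A_\lambda$ with $f_\lambda(x_\lambda)\notin U$. Compactness of $K$ extracts a convergent subnet $x_{\lambda_\mu}\to x$, and this is exactly where the Hausdorff hypothesis enters, for it makes $K$ closed and so places the limit $x$ in $K$. The assumed criterion then yields $x\in A$ and $f_{\lambda_\mu}(x_{\lambda_\mu})\to f(x)$; since $x\in K\cap A$ and $f(K\cap A)\subseteq U$ we have $f(x)\in U$, so $f_{\lambda_\mu}(x_{\lambda_\mu})\in U$ finally, contradicting the choice $f_{\lambda_\mu}(x_{\lambda_\mu})\notin U$. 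Hence $f_\lambda\in\clN(K,U)$ finally for every subbasic neighbourhood, giving convergence.

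I expect the delicate point in each direction to be the sourcing of the compact set on which the subbase acts. In~(a) it is the fabrication of a compact neighbourhood inside the continuity neighbourhood $O$, which is forced by local compactness and is unavoidable because the compact-open subbase only constrains behaviour on compacta. In~(b) it is the observation that the governing compact set is $K$ itself together with the use of its closedness (Hausdorff) to trap the extracted limit inside $K$; without that, the limit could leave $K$, $f(x)\in U$ would fail, and the contradiction would collapse.
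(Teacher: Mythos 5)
Your proposal is correct and takes essentially the same approach as the paper: part~(a) builds a compact neighbourhood of $x$ inside the continuity neighbourhood and combines the two finality estimates exactly as the paper does, and part~(b) reproduces the paper's contradiction argument\,---\,a cofinal (hence directed) index set where $f_\lambda(K)\not\subseteq U$, points $x_\lambda\in K\cap A_\lambda$ with $f_\lambda(x_\lambda)\not\in U$, a convergent subnet extracted by compactness of $K$, and the Hausdorff hypothesis closing $K$ to trap the limit, so that the convergence criterion forces $f(x)\in U$ against the construction. The only differences are expository: you make explicit the partial-map reading $f(K)=f(K\cap A)$ and the appeal to subbase convergence (Lemma~\ref{lm:subbase-convergence}), which the paper leaves implicit.
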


\begin{proof}
(a)
Let $x_\mu\in A_{\lambda_\mu}$ be such that $x_\mu\to x\in A$, and let
$U\ni f(x)$ be open. Choose a compact neighbourhood $K\ni x$ such that
$f(K)\subseteq U$. Choose $\lambda^*$ such that $f_\lambda(K)\subseteq U$ for
$\lambda\ge\lambda^*$. Choose $\mu_1^*$ such that $x_\mu\in K$ for
$\mu\ge\mu_1^*$. Choose $\mu_2$ such that $\lambda_\mu\ge\lambda^*$ for
$\mu\ge\mu_2^*$. Choose $\mu^*$ such that $\mu^*\ge\mu_1^*$ and
$\mu^*\ge\mu_2^*$. Then $\mu\ge\mu^*$ implies both $x_\mu\in K$ and
$f_{\lambda_\mu}(K)\subseteq U$, from which $f(x_{\lambda_\mu})\in U$. This
shows $f_{\lambda_\mu}(x_\mu)\to f(x)$ since $U$ was an arbitrary open
set containing $f(x)$.

(b)
Assume that $f_\lambda$ does not converge in the compact-open topology.
Assumed, then, is a compact subset $K$ of $\clX$, and an open $U$ with
$f(K)\subseteq U$, such that, for all $\lambda^*$ there is a
$\lambda\ge\lambda^*$ such that $f_\lambda(K)\not\subseteq U$, 
implying by Lemma~\ref{lm:cofinal-implies-directed} that 
$\Lambda'=\set{\lambda}{f_\lambda(K)\not\subseteq U}$ is
directed. For each $\lambda\in\Lambda'$,
pick $x_\lambda\in K$ such that $f_\lambda(x_\lambda)\not\in U$. There is a
convergent subnet $x_{\lambda_\mu}\ $ of $x_\lambda$, say
$x_{\lambda_\mu}\to x$. Then $x\not\in A$, or $x\in A$ and
$f_{\lambda_\mu}(x_\mu)$ does not converge to $f(x)$, where 
$x_\mu\equiv x_{\lambda_\mu}$, because $K$ is closed (since $\clX$ is Hausdorff)
implies $x\in K$, from which $f(x)\in U$, whereas
$f_{\lambda_\mu}(x_\mu)\not\in U$.
\end{proof}

\section{Conclusions}
\label{section:conclusions}

In the numerical analysis of ordinary differential equations, we deal with mesh
functions of a real variable, with values in $\bbR^n$. For a partial
differential equation the domain may be such as an open subset of a manifold.
Such domains are almost universally second countable locally compact Hausdorff
spaces.  And then, with no further assumptions,
Theorem~\ref{th:fell-convergence-partial-function} provides that the natural
invariant convergence notion~\eqref{eq:evaluation-convergence-3} exactly
corresponds to the well-studied Fell topology on the space of numerical
approximations.

Fell-convergence is essentially equivalent to verifying machine
convergence over varying start and end times and with a varying grid.  The
closed sets of the Fell topology are logically accessible via the convergence
criteria; the open sets via the explicit and easily visualized local base
provided by Theorem~\ref{th:fell-topology-base}. The development operates at
the topological level\,---\,it does not presuppose any smooth differentiable
structure.

\section*{Appendix: topologies from convergence criteria}

A \emph{directed set} is a set $\Lambda$ with a \emph{directed preorder}:
a relation $\ge$ which satisfies: (1)~\emph{reflexive}: $\lambda\ge\lambda$;
(2)~\emph{transitive}: $\lambda_1\ge\lambda_2$ and $\lambda_2\ge\lambda_3$
implies $\lambda_1\ge\lambda_3$; and (3)~\emph{upper bounds}: for all
$\lambda_1$ and $\lambda_2$ there is a $\lambda^*$ such that
$\lambda^*\ge\lambda_1$ and $\lambda^*\ge\lambda_2$. A property $P(\lambda)$ is
\emph{final} if there is a $\lambda^*\in\Lambda$ such that $P(\lambda)$ is
true for all $\lambda\ge\lambda^*$. A property $P(\lambda)$ is \emph{cofinal}
if for all $\lambda^*\in\Lambda$ there is a $\lambda\in\Lambda$ such that
$P(\lambda)$ is true for $\lambda\ge\lambda^*$. A \emph{net} is a function
$x_\lambda$ on a directed set with values in a topological space. $x_\lambda$
\emph{converges} to $x$ if for all neighbourhoods $U\ni x$ there is a
$\lambda^*$ such that $x_\lambda\in U$ for all $\lambda>\lambda^*$, i.e., if
$x_\lambda$ is finally in any neighbourhood of $x$. A \emph{subnet} of
$x_\lambda$ is a net $x_{\lambda_\mu}$ where $\lambda_\mu$
is map from a directed set $M$ to $\Lambda$ which satisfies (1)~\emph{monotone}:
$\mu_1\ge\mu_2$ implies $\lambda_{\mu_1}\ge\lambda_{\mu_2}$; and
(2)~\emph{cofinal}: for all $\lambda\in\Lambda$ there is an $\mu\in M$ such that
$\lambda_\mu\ge\lambda$. $x\in\clX$ is a \emph{cluster point} of~$x_\lambda$
if for all neighbourhoods $U\ni x$ and all~$\lambda^*$ there is
a~$\lambda\ge\lambda^*$ such that $x_\lambda\in U$, i.e., $x_\lambda$ is
cofinally in any neighbourhood of~$x$. The set of cluster points of~$x_\lambda$
will be denoted by $(x_\lambda)^\infty$.

The textbook \cite{RundeV-2005-1} disposes of the monotone condition on
subnets, but imposes the stronger notion of finality: for all
$\lambda^*\in\Lambda$ there is an $\mu^*\in M$ such that
$\lambda_\mu\ge\lambda^*$ whenever $\mu\ge\mu^*$.

\begin{lemma}\label{lm:cofinal-implies-directed}
Suppose $\Lambda$ is directed and $\Lambda'\subseteq\lambda$ is cofinal, i.e,
for all $\lambda\in\Lambda$ there is a $\lambda'\ge\lambda$ such that
$\lambda'\in\Lambda'$. Then $\Lambda'$ is directed.
\end{lemma}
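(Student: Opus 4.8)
The plan is to verify directly the three conditions in the appendix's definition of a directed preorder for $\Lambda'$, equipped with the relation $\ge$ inherited by restriction from $\Lambda$. Reflexivity and transitivity come for free: each is a universally quantified statement about elements (respectively triples of elements), so it holds in any subset under the restricted relation. Hence the entire content of the lemma reduces to establishing the upper bound property \emph{within} $\Lambda'$.

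For that property I would begin with arbitrary $\lambda_1',\lambda_2'\in\Lambda'$. Since $\Lambda$ is directed and $\lambda_1',\lambda_2'$ are in particular elements of $\Lambda$, there is a $\lambda^*\in\Lambda$ with $\lambda^*\ge\lambda_1'$ and $\lambda^*\ge\lambda_2'$. The one subtlety is that this $\lambda^*$ need not lie in $\Lambda'$, so it does not yet witness the upper bound property inside $\Lambda'$. This is exactly where cofinality is used: applying the cofinality hypothesis to $\lambda^*$ produces a $\lambda''\in\Lambda'$ with $\lambda''\ge\lambda^*$. Transitivity then yields $\lambda''\ge\lambda_1'$ and $\lambda''\ge\lambda_2'$, and since $\lambda''\in\Lambda'$ it serves as the required common upper bound in $\Lambda'$.

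There is no genuine obstacle here; the lemma is a short structural observation, and the proof occupies only a few lines. The single point that must not be overlooked is that an upper bound produced by directedness of the ambient $\Lambda$ may fall outside $\Lambda'$, so cofinality is invoked precisely once—to lift that upper bound back into $\Lambda'$—before transitivity finishes the argument.
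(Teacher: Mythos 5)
Your proof is correct and follows essentially the same route as the paper's: use directedness of $\Lambda$ to find an upper bound of $\lambda_1',\lambda_2'$, lift it into $\Lambda'$ by cofinality, and conclude by transitivity. The only difference is that you explicitly note the inheritance of reflexivity and transitivity under restriction, which the paper leaves implicit.
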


\begin{proof}
If $\lambda'_1,\lambda'_2\in\Lambda'$ then choose $\lambda_3\in\Lambda$
such that $\lambda_3\ge\lambda'_1$ and $\lambda_3\ge\lambda'_2$. Choose
$\lambda'_3\in\Lambda'$ such that $\lambda'_3\ge \lambda_3$. This suffices
because, by transitivity, $\lambda'_3\ge\lambda'_1$ and
$\lambda'_3\ge\lambda'_2$.
\end{proof}

\begin{lemma}\label{lm:cluster-point-iff-subnet}
(Theorem 11.5 of \cite{WillardS-1970-1}).
A net $x_\lambda$ has a cluster point $x$ if and only if it has a subnet
that converges to $x$.
\end{lemma}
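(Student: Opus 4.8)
The plan is to prove the two implications separately, the forward direction (a convergent subnet forces a cluster point) being routine and the reverse (a cluster point yields a convergent subnet) carrying the real content, by a construction paralleling the proof of Proposition~\ref{pp:limit-points-and-limits}(a). For the forward direction, suppose $x_{\lambda_\mu}\to x$ is a subnet. Given an open $U\ni x$ and any $\lambda^*\in\Lambda$, convergence supplies $\mu_1$ with $x_{\lambda_\mu}\in U$ for $\mu\ge\mu_1$, and cofinality of $\lambda_\mu$ supplies $\mu_0$ with $\lambda_{\mu_0}\ge\lambda^*$; choosing $\mu$ above both $\mu_0$ and $\mu_1$ in the directed index set and invoking monotonicity gives $\lambda_\mu\ge\lambda^*$ with $x_{\lambda_\mu}\in U$. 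Since $U$ and $\lambda^*$ were arbitrary, $x$ is a cluster point.

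For the reverse direction, assume $x$ is a cluster point and set $M=\set{(\lambda,U)}{\text{$U$ open},\ x\in U,\ x_\lambda\in U}$, ordered by $(\lambda_1,U_1)\ge(\lambda_2,U_2)$ exactly when $\lambda_1\ge\lambda_2$ and $U_1\subseteq U_2$. The cluster-point hypothesis is precisely what makes $M$ directed: given two members, I would intersect their neighbourhoods, bound their $\Lambda$-coordinates by some $\lambda_0$ using directedness of $\Lambda$, and then use the cluster property to find $\lambda\ge\lambda_0$ with $x_\lambda$ in the intersected neighbourhood, producing an upper bound in $M$. The projection $(\lambda,U)\mapsto\lambda$ is monotone by definition of the order, and it is cofinal because, taking $U=\clX$ and applying the cluster property above any prescribed $\lambda^*$, one obtains a member of $M$ whose first coordinate exceeds $\lambda^*$; thus $x_{(\lambda,U)}:=x_\lambda$ is a genuine subnet in the sense of the appendix.

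It then remains to verify convergence: given open $V\ni x$, the cluster property yields some $(\lambda_0,V)\in M$, and every $(\lambda,U)\ge(\lambda_0,V)$ satisfies $U\subseteq V$, whence $x_\lambda\in U\subseteq V$, so the subnet is eventually in $V$. I expect the reverse direction to be the main obstacle, not through any single hard estimate but in the bookkeeping: one must resist indexing the subnet on $\Lambda$ alone and instead enlarge the index set by pairing indices with neighbourhoods of $x$, then check both the monotonicity and cofinality clauses of the subnet definition against that enlarged order (the stronger finality variant of \cite{RundeV-2005-1} recorded in the appendix is not needed).
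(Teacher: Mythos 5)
Your proof is correct and takes essentially the same approach as the paper's: the easy direction extracts the cluster property from convergence together with cofinality and monotonicity of $\mu\mapsto\lambda_\mu$, and the substantive direction builds the subnet on pairs $(\lambda,U)$ with $x_\lambda\in U$, ordered componentwise with reverse inclusion on neighbourhoods, projecting to $\Lambda$. Your write-up is in fact more careful than the paper's terse version\,---\,it makes the directedness, cofinality, and convergence checks explicit, and it states the order correctly (the paper's proof pairs $\le$ with $\subseteq$, which reads as a typo given the ordering used in Proposition~\ref{pp:limit-points-and-limits}(a)).
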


\begin{proof} If $x$ is a cluster point of $x_\lambda$
then $\sset{(\lambda, U)}$ such that $U$ is a neighbourhood of $x$ and
$x_\lambda\in U$  is directed by
$(\lambda_1,U_1)\le(\lambda_2, U_2)\Leftrightarrow\lambda_1\le\lambda_2$ and
$U_1\subseteq U_2$ and $(\lambda,U)\mapsto \lambda$ and defines a subnet that
converges to $x$. Conversely, given a subnet $x_{\lambda_\mu}$, a neighbourhood
$U\ni x$, and a $\lambda^*$, choose $\mu$ such that $\lambda_\mu>\lambda^*$ and
$x_{\lambda_\mu}\in U$.
\end{proof}

The standard result regarding convergence and topologies, which
in~\cite{WillardS-1970-1} is relegated to an exercise, is
Theorem~\ref{th:convergence-topology}.

\begin{theorem}\label{th:convergence-topology}
Net convergence on a topological space has the following properties:
\begin{enumerate}\alphalist
  \item
    the constant net $x_\lambda=x$ converges to $x$; and
  \item
    if $x_\lambda$ converges to $x$ then every subnet of $x_\lambda$ also
    converges to $x$; and
  \item
    if $x_\lambda$ converges to $x$ and $x_{\lambda\mu}$ converges to
    $x_\lambda$ for each fixed $\lambda$ and a lexicographic ordering on the
    two character words $\lambda\mu$, then $x_{\lambda\mu}$ has a subnet which
    converges to $x$; and
  \item
    if every subnet of $x_\lambda$ has a subnet that converges to $x$,
    then $x_\lambda$ converges to $x$.
\end{enumerate}
Conversely, given a convergence criteria satisfying~(a)--(c), $\onm{cl}
E\equiv\set{\lim x_\lambda}{x_\lambda\in E}$ is a Kuratowski closure defining a
topology (the \emph{convergence topology}) in which a net that satisfies the
convergence criteria also converges in the topology.  If the convergence also
satisfies~(d) then every net which converges in the topology also satisfies the
convergence criteria.
\end{theorem}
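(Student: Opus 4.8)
The plan is to split the statement into its two halves: first that genuine net convergence satisfies (a)--(d), then that criteria satisfying (a)--(c) generate a topology via $\onm{cl}$, with (d) supplying the reverse match. For the forward half, (a) is immediate since a constant net sits in every neighbourhood of its value, and (b) is routine: if $x_\lambda\in U$ finally for open $U\ni x$ and $\lambda_\mu$ is a (monotone, cofinal) subnet, cofinality gives $\mu^*$ with $\lambda_{\mu^*}\ge\lambda^*$ and monotonicity propagates $\lambda_\mu\ge\lambda^*$ for $\mu\ge\mu^*$. I would prove (d) by contraposition: if $x_\lambda\not\to x$ then some open $U\ni x$ is missed cofinally, so $\set{\lambda}{x_\lambda\notin U}$ is cofinal, hence directed by Lemma~\ref{lm:cofinal-implies-directed}, and the corresponding subnet lies wholly outside $U$ and therefore admits no subnet converging to $x$. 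For the iterated-limit clause (c) I would show $x$ is a \emph{cluster point} of the lexicographic net $x_{\lambda\mu}$ and invoke Lemma~\ref{lm:cluster-point-iff-subnet}: given open $U\ni x$ and a base index $(\lambda^*,\mu^*)$, choose $\lambda\ge\lambda^*$ with $x_\lambda\in U$ (possible since $x_\lambda\to x$), and then, $U$ being open and $x_{\lambda\mu}\to x_\lambda\in U$, choose $\mu\ge\mu^*$ with $x_{\lambda\mu}\in U$.

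For the converse I would first verify the Kuratowski axioms for $\onm{cl}E\equiv\set{\lim x_\lambda}{x_\lambda\in E}$. Emptiness and $E\subseteq\onm{cl}E$ follow from (a) via constant nets, and monotonicity is built into the definition. Additivity $\onm{cl}(E\cup F)=\onm{cl}E\cup\onm{cl}F$ reduces to the remark that a net valued in $E\cup F$ is cofinally in one of the two (otherwise a common upper bound of the two escape indices would lie outside both), so by (b) a subnet in $E$, or in $F$, inherits the limit. Idempotency $\onm{cl}(\onm{cl}E)=\onm{cl}E$ is exactly where (c) is used: a point of $\onm{cl}(\onm{cl}E)$ is the limit of an outer net in $\onm{cl}E$ each term of which is a limit of an inner net in $E$, and (c) extracts a subnet of the combined net converging to the outer limit whose values still lie in $E$. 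With the topology in hand, criteria-convergence implies topological convergence: were $x_\lambda\to x$ in the criteria but not topologically, it would be cofinally outside some open $U$, hence cofinally inside the closed set $C=\clX\setminus U$; the cofinal subnet still converges to $x$ by (b), forcing $x\in\onm{cl}C=C$, a contradiction.

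The final implication, that under (d) topological convergence implies criteria-convergence, is the step I expect to be the main obstacle. Given $x_\lambda\to x$ topologically, each tail $A_\lambda=\set{x_\gamma}{\gamma\ge\lambda}$ has $x$ in its closure, so each tail carries, by definition of $\onm{cl}$, a criteria-net $x_{\lambda\nu}\to x$ with values $x_{g_\lambda(\nu)}$ where $g_\lambda(\nu)\ge\lambda$. Feeding the constant outer net (which converges to $x$ by (a)) together with these inner nets into (c) yields a subnet of the combined net that converges to $x$ in the criteria. The delicate point is that, to apply (d), I need a bona fide \emph{subnet of the original net $x_\lambda$}: the diagonal index map $(\lambda,\nu)\mapsto g_\lambda(\nu)$ is cofinal in $\Lambda$ (for $\lambda\ge\lambda_0$ one has $g_\lambda(\nu)\ge\lambda\ge\lambda_0$) but is not monotone, so the construction must be read against the finality-based notion of subnet recorded in the Appendix (following \cite{RundeV-2005-1}) rather than the monotone-plus-cofinal one. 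Verifying this finality, so that the extracted object legitimately witnesses $x$ as a cluster point of $x_\lambda$ along a subnet, is the crux; once such a criteria-convergent subnet is produced for every subnet of $x_\lambda$ (each still topologically convergent to $x$ by the forward (b)), condition (d) closes the argument and delivers criteria-convergence of $x_\lambda$ itself. The remaining bookkeeping of the diagonal directed set is formal.
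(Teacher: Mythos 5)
Your forward direction, the verification of the Kuratowski closure axioms, and the implication ``criteria convergence $\Rightarrow$ topological convergence'' all track the paper's proof essentially line by line: (c) via the cluster-point lemma (Lemma~\ref{lm:cluster-point-iff-subnet}), (d) by contraposition through Lemma~\ref{lm:cofinal-implies-directed}, additivity of $\onm{cl}$ via directedness of one of the two index sets, idempotency via (c). The divergence---and the genuine gap---is in the last implication. The paper does \emph{not} argue as you do: it takes the contrapositive, using the negation of (d) to produce a subnet $x_{\lambda_\mu}$ no subnet of which criteria-converges to $x$, sets $K=\onm{cl}\sset{x_{\lambda_\mu}}$, concludes $x\notin K$, and observes that $x_\lambda$ lies cofinally in the closed set $K$, so $x_\lambda\not\to x$ in the topology. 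No diagonal construction and no use of (c) occurs at this stage.

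Your route---tails, inner criteria-nets converging to $x$, a constant outer net, (c) to extract a convergent net, then (d)---is the classical Kelley iterated-limit argument, but it does not close under the paper's definitions. The net produced by (c) has index map $(\lambda,\nu)\mapsto g_\lambda(\nu)$ into $\Lambda$ which is final but not monotone, so it is \emph{not} a subnet of $x_\lambda$ (nor of the given subnet) in the monotone-plus-cofinal sense fixed in the Appendix, which is the sense in which hypotheses (b)--(d) are stated. Hence you have not verified the hypothesis of (d), namely that every subnet has a \emph{subnet} criteria-converging to $x$. Your proposed repair---``read the theorem against the finality-based notion of subnet''---is not a repair but a change of hypotheses: an abstract criteria assumed to satisfy (a)--(d) for monotone cofinal subnets need not satisfy them for merely final ones, and, unlike topological convergence (which depends only on eventuality), an abstract criteria gives no mechanism for upgrading a final subnet that criteria-converges into a monotone cofinal one that does. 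So as written you prove a variant of the theorem with a different, unverified-to-be-equivalent meaning of ``subnet.'' Your instinct that this is the crux is sound---in fact the paper's own step ``$x\notin K$'' quietly identifies nets valued in the set $\sset{x_{\lambda_\mu}}$ with subnets of $x_{\lambda_\mu}$, so the delicacy is real on both sides---but your proposal leaves the gap open, whereas the paper's contrapositive route avoids the diagonal/monotonicity problem entirely.
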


\begin{proof}
Net convergence on topological spaces does satisfy~(a)--(d):
\begin{enumerate}\alphalist
  \item
    The constant net $x_\lambda=x$ converges because if $U\ni x$ is
    open then any $\lambda^*$ provides $x_\lambda\in U$ for all
    $\lambda\ge\lambda^*$.
  \item
    Suppose $x_{\lambda_\mu}$ is a subnet of $x_\lambda\to
    x$. If $U\ni x$ is open then choose $\lambda^*$ so that
    $\lambda\ge\lambda^*$ implies $x_\lambda\in U$. By the definition
    of a subnet, there is a $\mu^*$ such that $\mu\ge\mu^*$ implies
    $\lambda_\mu\ge\lambda^*$, so $\mu\ge\mu^*$ implies
    $x_{\lambda_\mu}\in U$.
  \item
    By Lemma~\ref{lm:cluster-point-iff-subnet}, it suffices to show that
    $x$ is a cluster point of $x_{\lambda\mu}$. Suppose $U\ni x$ is open,
    $\lambda^*\in\Lambda$ and $\mu^*\in M_{\lambda^*}$. Since
    $x_\lambda\rightarrow x$, there is a $\lambda>\lambda^*$ such that
    $x_\lambda\in U$.  Since $x_{\lambda\mu}\rightarrow x_\lambda$ in $\mu\in
    M_\lambda$, and since $x_\lambda\in U$, there is a $\mu\in M_{\lambda}$
    such that $x_{\lambda\mu}\in U$ ($\mu^*$ is irrelevant because
    $\lambda>\lambda^*$ and the ordering is lexicographic).
  \item
    If $x_\lambda$ does not converge to $x$ then there is an open $U\ni x$ such
    that, for all $\lambda^*$ there is a $\lambda\ge\lambda^*$ with
    $x_\lambda\not\in U$.  Then $M=\set{\mu\in\Lambda}{x_\mu\not\in U}$ is
    directed: if $\mu_1,\mu_2\in M$ then choose $\lambda^*>\mu_1$ and
    $\lambda^*>\mu_2$, and then there is a $\mu>\lambda^*$ such that
    $x_\mu\not\in U$, so $\mu\in M$ and $\mu\ge\lambda^*\ge\mu_1$ and
    $\mu\ge\lambda^*\ge\mu_2$. Clearly, the restriction of $x_\lambda$ to $M$ is
    a subnet which has no subnet that converges to $x$.
\end{enumerate}

Assuming~(a) and~(c), $E\mapsto\onm{cl}E$ is a closure operation, and so
provides a topology with closed sets exactly those $E$ such that $\onm{cl}E=E$,
as follows:
\begin{itemize}
  \item[--]
    $\onm{cl}\emptyset=\emptyset$: otherwise, after choosing
    $x\in\onm{cl}\emptyset$ there is an $x_\lambda\to x$ with
    $x_\lambda\in\emptyset$, which is impossible.
  \item[--]
    $E\subseteq\onm{cl} E$: if $x\in E$ then any constant net is a net in
    $E$ converging to $x$, so $x\in\onm{cl} E$.
  \item[--]
    $\onm{cl}\onm{cl} E=E$: By the above,
    $E\subseteq\onm{cl} E\subseteq\onm{cl}\onm{cl} E$. On the other hand,
    suppose $x_\lambda\to x$ with $x_\lambda\in\onm{cl} E$ and
    choose $x_{\lambda\mu}$ such that $\lim_\mu
    x_{\lambda\mu}=x_\lambda$ with $x_{\lambda\mu}\in E$. Then
    $x\in\onm{cl} E$ because there is a subnet of $x_{\lambda\mu}$ in
    the lexicographic ordering such that 
    $x_{\lambda\mu}\to x$.
  \item[--]
    If $x\in\onm{cl}A$, then there is a $x_\lambda\rightarrow x$ with
    $x_\lambda\in A$. Since $x_\lambda\in A\cup B$ also, this implies
    $x_\lambda\in\onm{cl}(A\cup B)$. Similarly
    $\onm{cl}B\subseteq\onm{cl}(A\cup B)$ so
    $\onm{cl}A\cup\onm{cl}B\subseteq\onm{cl}(A\cup B)$. Conversely, if
    $x\in\onm{cl}(A\cup B)$ then there is a net $x_\lambda\to x$ with
    $x_\lambda\in A$ or $x_\lambda\in B$. One of $\set{\lambda}{x_\lambda\in
    A}$ or $\set{\lambda}{x_\lambda\in B}$ is directed, or else there would be
    $\lambda_1^A$ and $\lambda_2^A$ with $x_{\lambda^A_1}\in A$,
    $_{\lambda^A_2}\in A$ and $x_\lambda\not\in A$ for all
    $\lambda\ge\lambda^A_1$ and $\lambda\ge\lambda^A_2$, and similarly with $A$
    and $B$ exchanged. But then choosing $\lambda_3$ such that all of
    $\lambda_3\ge\lambda^A_1,\lambda^A_2,\lambda^B_1,\lambda^B_2$ implies
    $x_\lambda\not\in A$ and $x_\lambda\not\in B$, a contradiction. This
    defines a subnet either in $A$ or $B$ which converges to~$x$
    by~(c), so either $x\in\onm{cl}A$ or
    $x\in\onm{cl}B$.\smallskip
\end{itemize}

Since the \emph{closed} sets are more directly defined by the convergence
topology than \emph{open} sets, it is best to directly use the following: in a
topological space, $x_\lambda\to x$ is false if and only if there is a closed
set $K$ such that $x\not\in K$ and, for all $\lambda^*$ there is a
$\lambda\ge\lambda^*$ such that $x_\lambda\in K$. One then shows that
$x_\lambda\to x$ (convergence topology) is false if and only if $x_\lambda\to
x$ (convergence criterion) is false. Note that, under~(a)--(c), the closed sets
defined by
\begin{equation*}
  \mbox{$K$ closed iff $x_\lambda\in K$ and $x_\lambda\rightarrow x$ 
        implies $x\in K$}
\end{equation*}
are exactly the same as the closed sets defined using the topology, by
\begin{equation*}
  \mbox{$K$ closed iff $\onm{cl}K=K$}.
\end{equation*}

Assume~(a)--(c),
and suppose that $x_\lambda\not\to x$ (topology), so there is a
closed set $K$ as in the statement just above. Then $\set{\lambda}{x_\lambda\in
K}$ is directed and so is a subnet. If this subnet converges to $x$ (criteria)
then $x\in K$, a contradiction. So $x_\lambda$ cannot converge to $x$
(criteria) because $x_\lambda$ has a subnet which does not converge
to $x$ (criteria).

Assume~(a)--(d), and suppose that $x_\lambda\not\to x$ (criteria).  Then there
is a subnet $x_{\lambda_\mu}$ of $x_\lambda$, every subnet of which does not
converge to $x$ (criteria). So defining $K=\onm{cl}{\sset{x_{\lambda_\mu}}}$,
it follows that $x\not\in K$. Given any $\lambda^*$, there is a $\mu$ such that
$\lambda_\mu\ge\lambda^*$, and, setting $\lambda=\lambda_\mu$ this provides a
$\lambda\ge\lambda^*$ such that $x_{\lambda}\in K$, from which
$x_\lambda\not\to x$ (convergence topology).
\end{proof}

\begin{lemma}\label{lm:subbase-convergence}
Let $\clC$ be a subbase for a topology $\tau$. Then $x_\lambda\rightarrow
x$ if and only if, for all $V\in\clC$, there is a $\lambda^*$ such that
$x_\lambda\in V$ for all $\lambda\ge\lambda^*$.
\end{lemma}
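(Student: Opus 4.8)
The plan is to prove both implications directly from the definition of net convergence and from the fact that finite intersections of the members of a subbase form a base for $\tau$. Throughout, the members of $\clC$ at issue are those that contain $x$: for $V\in\clC$ with $x\notin V$ even a convergent net need not be finally in $V$, so the condition is to be read over those $V\in\clC$ with $x\in V$. The forward implication is then immediate. Each $V\in\clC$ is open in $\tau$, so if $x\in V$ then $V$ is a neighbourhood of $x$; convergence $x_\lambda\to x$ therefore supplies, by its very definition, a $\lambda^*$ with $x_\lambda\in V$ for all $\lambda\ge\lambda^*$.

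For the converse I would assume that for every $V\in\clC$ with $x\in V$ the net is finally in $V$, fix an arbitrary open neighbourhood $U\ni x$, and aim to show $x_\lambda$ is finally in $U$. Since $\clC$ is a subbase, the finite intersections of its members form a base, so there are $V_1,\ldots,V_n\in\clC$ with $x\in V_1\cap\cdots\cap V_n\subseteq U$. For each $i$ the hypothesis provides $\lambda_i^*$ such that $x_\lambda\in V_i$ whenever $\lambda\ge\lambda_i^*$. I would then locate a single index $\lambda^*$ dominating all of $\lambda_1^*,\ldots,\lambda_n^*$, whence $\lambda\ge\lambda^*$ forces $x_\lambda\in V_i$ for every $i$ and hence $x_\lambda\in V_1\cap\cdots\cap V_n\subseteq U$. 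As $U$ was an arbitrary neighbourhood of $x$, this establishes $x_\lambda\to x$.

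The one step needing care\,---\,modest as it is\,---\,is producing that common upper bound $\lambda^*$. The directedness axiom only guarantees upper bounds for \emph{pairs}, so dominating $n$ thresholds simultaneously requires a short induction: combine $\lambda_1^*$ and $\lambda_2^*$ into an upper bound, combine that with $\lambda_3^*$, and continue, using transitivity of $\ge$ to conclude that the final element dominates each $\lambda_i^*$. Everything else is purely definitional, so this finite-directedness induction is the only substantive ingredient of the argument.
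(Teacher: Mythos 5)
Your proof is correct and takes essentially the same route as the paper's: the forward direction is purely definitional since subbase elements are open, and the converse reduces an arbitrary neighbourhood of $x$ to a finite intersection $V_1\cap\cdots\cap V_n$ of subbase elements and then dominates the finitely many thresholds $\lambda_i^*$ by a single $\lambda^*$ using directedness and transitivity. If anything, your version is a touch more careful than the paper's, which leaves the restriction to $V\ni x$ implicit and loosely says every open set \emph{is} a finite intersection of subbase elements, where the accurate statement (which you give) is that such finite intersections form a base, so one sits inside $U$ around $x$.
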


\begin{proof} 
If $V\in\clC$ then $V$ is open so $x_\lambda$ is finally in $V$ by definition
of net convergence. For the converse, every open $V$ is the finite intersection
of sets $V_i\in\clC$.  For each $i$ choose $\lambda_i^*$ such that
$x_\lambda\in V_i$ for all $\lambda\ge\lambda_i^*$. The upper bound property of
directed sets and transitivity provided an upper bound $\lambda^*$ for all
$\lambda_i^*$. If $\lambda\ge\lambda^*$ then $x_\lambda\in V_i$ for all $i$ and
hence $x_\lambda\in V$.
\end{proof}

\begin{definition}\label{df:weak-topology}
Let $X$ be a set suppose $f_\alpha\colon\clX\to\clY_\alpha$ are maps,
where $Y_\alpha$ are topological spaces. The \emph{weak topology defined by
$f_\alpha$} is the topology with subbase
$\bigcup_{\alpha}\set{f_\alpha^{-1}(V)}{\mbox{$V\subseteq Y_\alpha$ is open}}$.
\end{definition}

\begin{proposition}\label{pp:weak-topology-convergence}
A net $x_\lambda$ converges to $x$ in the weak topology defined by
$f_\alpha\colon\clX\to\clY_\alpha$ if and only if
$f_\alpha(x_\lambda)\to f(x)$ for all $\alpha$.
\end{proposition}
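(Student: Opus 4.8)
The plan is to reduce the statement to Lemma~\ref{lm:subbase-convergence}, since the weak topology of Definition~\ref{df:weak-topology} is defined precisely by the subbase $\clC=\bigcup_\alpha\set{f_\alpha^{-1}(V)}{\mbox{$V\subseteq\clY_\alpha$ open}}$. By that lemma, $x_\lambda\to x$ in the weak topology if and only if $x_\lambda$ is finally in every subbasic set $f_\alpha^{-1}(V)\in\clC$ that contains~$x$. Thus the whole proposition becomes a matter of translating this finality condition across the maps $f_\alpha$, using the set-theoretic equivalence $x_\lambda\in f_\alpha^{-1}(V)\iff f_\alpha(x_\lambda)\in V$. (I read the right-hand side of the stated biconditional as $f_\alpha(x_\lambda)\to f_\alpha(x)$.)

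For the forward implication, I would fix an index $\alpha$ and an open $V\ni f_\alpha(x)$ in $\clY_\alpha$. Then $f_\alpha^{-1}(V)$ is a subbasic open set containing $x$, so $x_\lambda\to x$ supplies a $\lambda^*$ with $x_\lambda\in f_\alpha^{-1}(V)$, equivalently $f_\alpha(x_\lambda)\in V$, for all $\lambda\ge\lambda^*$. Since $V$ was an arbitrary neighbourhood of $f_\alpha(x)$, this is exactly $f_\alpha(x_\lambda)\to f_\alpha(x)$, and $\alpha$ was arbitrary.

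Conversely, assuming $f_\alpha(x_\lambda)\to f_\alpha(x)$ for every $\alpha$, I would verify the hypothesis of Lemma~\ref{lm:subbase-convergence}: that $x_\lambda$ is finally in each subbasic set of $\clC$ containing $x$. Such a set has the form $f_\alpha^{-1}(V)$ with $V\subseteq\clY_\alpha$ open and $x\in f_\alpha^{-1}(V)$, i.e.\ $f_\alpha(x)\in V$. Convergence of $f_\alpha(x_\lambda)$ to $f_\alpha(x)$ then gives a $\lambda^*$ with $f_\alpha(x_\lambda)\in V$, equivalently $x_\lambda\in f_\alpha^{-1}(V)$, for all $\lambda\ge\lambda^*$. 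Lemma~\ref{lm:subbase-convergence} then yields $x_\lambda\to x$ in the weak topology.

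There is essentially no obstacle here once the subbase characterization of net convergence is invoked: both directions collapse to the preimage equivalence together with the fact that the sets $f_\alpha^{-1}(V)$ exhaust the subbasic neighbourhoods of $x$. The only point requiring any care is that a general neighbourhood of $x$ in the weak topology is a finite intersection of such preimages rather than a single one; but Lemma~\ref{lm:subbase-convergence} has already absorbed that reduction, so the proof never has to treat finite intersections directly.
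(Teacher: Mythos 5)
Your proposal is correct and takes essentially the same route as the paper's own proof: both invoke Lemma~\ref{lm:subbase-convergence} to reduce convergence in the weak topology to finality in the subbasic sets $f_\alpha^{-1}(V)$, and then translate via the equivalence $x_\lambda\in f_\alpha^{-1}(V)\Leftrightarrow f_\alpha(x_\lambda)\in V$; you simply spell out the two directions that the paper compresses into one sentence. Your reading of the statement's $f(x)$ as the intended $f_\alpha(x)$ is also the right one.
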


\begin{proof} 
By Lemma~\ref{lm:subbase-convergence}, $x_\lambda\to x$ if and only if for all
$V\subseteq Y_\alpha$ there is a $\lambda^*$ such that $\lambda\ge\lambda^*$
implies $x_\lambda\in f_\alpha^{-1}(V)$, and the latter is equivalent to
$f_\alpha(x_\lambda)\in V$.
\end{proof}

There are a variety of well-known function-space topologies where a central aim
seems to be the capture of some particular notion of convergence, for which the
primary definition of the topology does not have an immediately obvious
relationship. The topology for distributional test function spaces,
the weak and Whitney fine topologies on differentiable maps between manifolds,
and even the familiar compact-open topology are examples.

Theorem~\ref{th:convergence-topology} suggests that, to define a topology
from a convergence, one should verify all of
Theorem~\ref{th:convergence-topology}(a--d).
In case of success the result is topological: not only is the topology is
obtained, but also \emph{a complete characterization of all convergent nets.}
If the aim is just to define a topology, then~(b) alone suffices: define
$A\subset X$ closed if $x_\lambda\in A$ and $x_\lambda\to x$ implies $x\in
A$. Within this definition, it is obvious then $\emptyset$ and $X$ are closed,
and that arbitrary intersections of closed sets are closed.  If $A$ and $B$ are
closed, $x_\lambda\in A\cup B$, and $x_\lambda\to x$, then $x_\lambda\in A$ or
$x_\lambda\in B$ (depending on $\lambda$). One of 
$\set{\lambda}{x_\lambda\in A}$ or $\set{\lambda}{x_\lambda\in B}$ is directed,
or else there would be $\lambda_1^A$ and $\lambda_2^A$ with $x_{\lambda^A_1}\in
A$, $_{\lambda^A_2}\in A$ and $x_\lambda\not\in A$ for all
$\lambda\ge\lambda^A_1$ and $\lambda\ge\lambda^A_2$, and analogously with
$B$. But then choosing $\lambda_3$ such that all of
$\lambda_3\ge\lambda^A_1,\lambda^A_2,\lambda^B_1,\lambda^B_2$ implies
$x_{\lambda_3}\not\in A$ and $x_{\lambda_3}\not\in B$, a contradiction. Thus
there is a subnet either in $A$ or $B$, and that converges to~$x$
by Theorem~\ref{th:convergence-topology}(b), so either $x\in A$ or $x\in B$ as
both those are closed. If a convergence criteria $\gamma$ satisfies~(b) then
the topology just described will be the \emph{topology $\gamma^\dagger$
generated by $\gamma$}, or just the convergence topology. If such a $\gamma$
also satisfies~(a) then it is $T_1$, because every one point set closed.

In such a topology, there are (at least) two notions of convergence: the given
one and net convergence in the topology itself. The first will be referred to
as \emph{generating}, while the
second will be referred to as \emph{topological}.  If $x_\lambda\rightarrow
x$ (generating) then $x_\lambda\rightarrow x$ (topological): Suppose
$x_\lambda\rightarrow x$ (generating) and $x_\lambda\not\rightarrow x$
(topological). Then there is an open neighbourhood $U\ni x$ such that, for all
$\lambda^*$ there is a $\lambda\ge\lambda^*$ such that $x_\lambda\not\in U$.
It follows that $\tilde\Lambda=\set{\lambda\in\Lambda}{x_\lambda\not\in U}$ is
directed and that $x_\lambda$ with $\lambda\in\tilde\Lambda$ is a subnet of
$x_\lambda$ which has no element in $U$.  However
by Theorem~\ref{th:convergence-topology}(b) this subnet converges to $x$
(generating), and $X\setminus U$ is closed, so $x\in X\setminus U$,
contradicting $x\in U$.  Given a convergence
criteria, the convergence topology is the finest topology such that the nets in
the convergence criteria converge in the topology: Suppose $\tau$ is such a
topology and $A$ is closed in $\tau$, and $x_\lambda\rightarrow x$ (generating)
with $x_\lambda\in A$. Then $x_\lambda\rightarrow x$ (in $\tau$) so $x\in A$,
and hence $A$ is closed in the convergence topology.

The point of all this is to facilitate the simple and transparent
identification of topologies from convergence criteria: the topology
$\gamma^\dagger$ is to be thought of as the most exact carrier of the
convergence criteria $\gamma$. There would not be that much gained from that
alone. Interestingly, this set-up is operationally effective, because the
principle topological notions are actually captured in the usual way by the
\emph{generating convergence.}

\begin{theorem}\label{th:preconvergence-suffices-closed-continuous}
Let $x_\lambda\rightarrow x$ be a pre-topological convergence criteria.
\begin{enumerate}
  \item
    $A\subset X$ is closed if and only if $x_\lambda\in A$ and
    $x_\lambda\rightarrow x$ (generating) implies $x\in A$.
  \item
    $U\subset X$ is open if and only if, for all $x_\lambda\to x$
    (generating) such that $x\in U$, there is an $\lambda^*$ such that
    $x_\lambda\in U$ whenever $\lambda\ge\lambda^*$.
  \item
    $f\colon X\rightarrow Y$ is continuous if and only if
    $\lim_\lambda f(x_\lambda)=f(x)$ for all nets $x_\lambda$
    such that $x_\lambda\rightarrow x$  (generating).
\end{enumerate}
\end{theorem}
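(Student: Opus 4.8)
The plan is to treat the three characterizations in order, since~(2) and~(3) both build on~(1) together with the basic properties of the generating convergence. Throughout I write $\gamma$ for the pre-topological convergence and $\gamma^\dagger$ for the topology it generates, and I use freely that a pre-topological $\gamma$ satisfies conditions~(a)--(c) of Theorem~\ref{th:convergence-topology}, so that $\gamma^\dagger$ is a genuine topology whose closed sets are, \emph{by construction}, exactly the sets $A$ for which $x_\lambda\in A$ and $x_\lambda\to x$ (generating) force $x\in A$. This is precisely the assertion of~(1): the stated net condition is the very defining property of the closed sets of $\gamma^\dagger$, so~(1) needs no argument beyond recalling the construction and the appendix observation that, under~(a)--(c), these net-theoretic closed sets coincide with the Kuratowski-closure closed sets $\onm{cl}A=A$.

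For~(2) I would argue by complementation: $U$ is open exactly when $X\setminus U$ is closed, and then apply~(1) to $X\setminus U$. The forward direction is the substantive one. Assuming $U$ open, suppose $x_\lambda\to x$ (generating) with $x\in U$ but $x_\lambda$ \emph{not} eventually in $U$; then $\set{\lambda}{x_\lambda\notin U}$ is cofinal in the index set, hence directed by Lemma~\ref{lm:cofinal-implies-directed}, so it carries a subnet of $x_\lambda$ lying entirely in $X\setminus U$. By Theorem~\ref{th:convergence-topology}(b) this subnet still converges to $x$ (generating), and since $X\setminus U$ is closed,~(1) forces $x\in X\setminus U$, contradicting $x\in U$. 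The reverse direction is a direct unwinding: if the ``eventually in $U$'' condition holds and $x_\lambda\notin U$ for all $\lambda$ with $x_\lambda\to x$, then $x\in U$ is impossible, so $X\setminus U$ is closed by~(1). The main technical point here is the subnet extraction, which is exactly where Lemma~\ref{lm:cofinal-implies-directed} and the subnet-stability of generating convergence, Theorem~\ref{th:convergence-topology}(b), enter.

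For~(3) the two implications rest on different facts, and keeping them apart is the real obstacle, since for a merely pre-topological $\gamma$ one has generating $\Rightarrow$ topological but \emph{not} the converse. For the forward direction I would take $f$ continuous and a net $x_\lambda\to x$ (generating); the appendix guarantees $x_\lambda\to x$ topologically in $\gamma^\dagger$, whence the standard net characterization of continuity yields $f(x_\lambda)\to f(x)$ in $Y$. For the reverse direction I would verify continuity through preimages of closed sets: given $C\subseteq Y$ closed and a net $x_\lambda\in f^{-1}(C)$ with $x_\lambda\to x$ (generating), the hypothesis gives $f(x_\lambda)\to f(x)$ in $Y$, and closedness of $C$ (limits of nets in a closed set remain in it) gives $f(x)\in C$, i.e.\ $x\in f^{-1}(C)$; by~(1) this shows $f^{-1}(C)$ is closed in $\gamma^\dagger$, so $f$ is continuous. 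The delicate point is that the forward argument must route through topological convergence (via generating $\Rightarrow$ topological) while the reverse must route through~(1), which is phrased with generating convergence; trying to phrase both directions through a single notion would run straight into the gap between the two convergences that condition~(d) would be needed to close.
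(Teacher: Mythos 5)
Your proposal is correct and takes essentially the same approach as the paper: part~(1) is read off from the definition of the closed sets of the convergence topology, and parts~(2) and~(3) are reduced to~(1) by complementation and by preimages of closed sets, respectively, exactly as in the paper's proof. The only difference is cosmetic: for the forward directions you inline the cofinal-subnet argument (Lemma~\ref{lm:cofinal-implies-directed} together with subnet stability of the generating criteria), whereas the paper treats those directions as already settled by the appendix fact that generating convergence implies topological convergence and proves only the converse implications.
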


\begin{proof} 
The first statement is by definition of the closed sets in the convergence
topology. For the next two statements, it suffices to show the converse, i.e.,
it suffices to show that generating convergence suffices.  For the second
statement, if $x\in U$ implies that every net $x_\lambda\to x$ (generating) is
eventually in $U$, then $x_\lambda\in X\setminus U$ and $x_\lambda\to x$
(generating) implies $x\in X\setminus U$ and hence $X\setminus U$ is closed, or
else $x_\lambda$ is both in $X$ and $X\setminus U$ for large enough $\lambda$.
For the third statement, suppose that $\lim_\lambda f(x_\lambda)=f(x)$ for all
nets $x_\lambda\to x$ (generating). If $B\subseteq Y$ is closed and
$x_\lambda\in f^{-1}(B)$ with $x_\lambda\rightarrow x$ (generating), then
$f(x)=f(\lim_\lambda x_\lambda)=\lim_\lambda f(x_\lambda)\in B$ so 
$x\in f^{-1}(B)$. This shows that $f$ if continuous because $f^{-1}(B)$ is
closed whenever $B$ is.
\end{proof}

Incidentally, Theorem~\ref{th:preconvergence-suffices-closed-continuous}
explains why sequences suffice for continuity of linear maps on the test
function spaces of distribution theory~(\cite{RudinW-1973-1}, Theorem~6.6). The
restriction to \emph{linear} maps arises from the issue of local convexity
referred to at the beginning: sequences do not suffice in the general for that
topology.

One has to exercise care, because of the loose relationship between a
convergence criteria and the topology it generates. For example, an element of
$X$ may be approximable from $A\subset X$ using nets in the convergence
topology, but inaccessible from $A$ via the generating convergence. It would be
an easy error to assert the existence of a net $x_\lambda$ such that
$x_\lambda\rightarrow x$ (generating) from the statement $x\in\onm{cl}A$. In
fact, there is the following: let $X=\bbR$ and use the convergence criteria
$x_\lambda\rightarrow x$ if $|x-x_\lambda|\le1$ for all $\lambda$. If $A=[0,1]$
then $\onm{pcl}A=[-1,2]$ while $\onm{cl}A=\bbR$, where $\onm{pcl}A$, or the
pre-closure, denotes the limits of all convergent nets in~$A$. The pre-closure
is not necessarily a closed set because it only contains those limit but not
necessarily limits of those limits.

If $X$ and $Y$ are topological spaces generated by convergence criteria, then
the topology generated by the product criteria $(x_\lambda,y_\lambda)\to(x,y)$
if $x_\lambda\to x$ (generating) and $y_\lambda\to y$ (generating) may be
strictly finer than the product topology, which is after all the coarsest
topology with continuous projections to the factors. Indeed, the criteria
$x_\gamma\to x$ if $|x_\gamma-x|\to 0$ through powers to $1/2$, generates a
topology on $\bbR$ strictly finer than the usual. In the product topology of
two copies of such, an open line segment with irrational slope does not contain
any nonconstant net in the product criteria hence is closed in the topology so
generated. However, open intervals are open in the factor topologies, and their
product is open in the product topology. Therefore such irrational sloped open
segments are not closed in the product topology because they do not contain the
endpoints which are in their closure; the product topology is strictly more
coarse then the topology generated by the product criteria. This has an
important operational consequence: to show that a bivariate function $f(x,y)$
is continuous in the product topology, it is generally insufficient to show
that $f(x_\lambda,y_\lambda)\to f(x,y)$ whenever $x_\lambda\rightarrow y$
(generating) and $y_\lambda\rightarrow y$ (generating).

\begin{definition}\label{df:topological-convergence-criteria}
A convergence criteria $\gamma$ is \emph{topological} if every convergent net in
$\gamma^\dagger$ satisfies $\gamma$.
\end{definition}

Some primitive notations are useful. Logical operations will be extended to the
convergence criteria with the obvious meaning: for example,
$\gamma_1\vee\gamma_2(x_\lambda\rightarrow
x)\equiv\gamma_1(x_\lambda\rightarrow x)\vee\gamma_2(x_\lambda\rightarrow x)$,
i.e., the logical ``and'' of $\gamma_1$ and $\gamma_2$.

\begin{theorem}\label{th:convergence-topology-lattice}
Suppose $\gamma_1$ and $\gamma_2$ are convergence criteria.
\begin{enumerate}\alphalist
  \item
    If $\gamma_1\Rightarrow\gamma_2$ then
    $\gamma_1^\dagger\supseteq\gamma_2^\dagger$ (relaxed convergence criteria
    generate finer topologies). The convergence criteria defines as all nets
    converge [only the constant nets converge] generates the discrete
    [indiscrete] topology.
  \item
    $(\gamma_1\vee\gamma_2)^\dagger=\gamma_1^\dagger\wedge\gamma_2^\dagger$,
    (the logical ``or'' of two criteria generates the intersection of the
    topologies generated by the criteria separately).
  \item
    $(\gamma_1\wedge\gamma_2)^\dagger\supseteq\gamma_1^\dagger\vee\gamma_2^\dagger$,
    (the logical ``and'' of two criteria generates a topology finer than the
    join of the topologies generated by the criteria separately). If $\gamma_1$
    and $\gamma_2$ are both topological then
    $(\gamma_1\wedge\gamma_2)^\dagger=\gamma_1^\dagger\vee\gamma_2^\dagger$.
\end{enumerate}
\end{theorem}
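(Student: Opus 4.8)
The plan is to work entirely at the level of closed sets, exploiting the defining property of $\gamma^\dagger$ recorded in the Appendix: a set $A$ is $\gamma^\dagger$-closed exactly when $x_\lambda\in A$ together with $x_\lambda\to x$ (generating, via $\gamma$) forces $x\in A$. Since a topology is determined by its closed sets, each of (a)--(c) reduces to comparing the $\gamma$-stable subsets for the various criteria, and almost everything follows by pushing the logical structure of the criteria through that one condition.

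For (a), suppose $\gamma_1\Rightarrow\gamma_2$ and let $A$ be $\gamma_2^\dagger$-closed. If $x_\lambda\in A$ and $x_\lambda\to x$ via $\gamma_1$, then $x_\lambda\to x$ via $\gamma_2$ as well, so $x\in A$; hence $A$ is $\gamma_1^\dagger$-closed. Thus every $\gamma_2^\dagger$-closed set is $\gamma_1^\dagger$-closed, so $\gamma_1^\dagger$ carries at least as many closed, hence open, sets and is finer, i.e.\ $\gamma_1^\dagger\supseteq\gamma_2^\dagger$. The two degenerate criteria are then read straight off the same condition: when only constant nets are declared convergent the hypothesis of the closed-set condition is vacuous, so every subset is closed, whereas when every net is declared to converge to every point a single net drawn from a nonempty set forces that set to contain every point.

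For (b), I would compute the closed sets of both sides. A set $A$ is $(\gamma_1\vee\gamma_2)^\dagger$-closed iff every net in $A$ that converges via $\gamma_1$ \emph{or} via $\gamma_2$ has its limit in $A$; splitting the disjunction, this is equivalent to the conjunction ``$A$ is $\gamma_1^\dagger$-closed and $A$ is $\gamma_2^\dagger$-closed.'' But the meet $\gamma_1^\dagger\wedge\gamma_2^\dagger$ is the intersection of the two open-set lattices, whose closed sets are exactly those closed in both topologies. The closed sets of the two sides therefore coincide, giving $(\gamma_1\vee\gamma_2)^\dagger=\gamma_1^\dagger\wedge\gamma_2^\dagger$.

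For (c), the inclusion is immediate from (a): since $\gamma_1\wedge\gamma_2\Rightarrow\gamma_1$ and $\gamma_1\wedge\gamma_2\Rightarrow\gamma_2$, part (a) gives $(\gamma_1\wedge\gamma_2)^\dagger\supseteq\gamma_i^\dagger$ for $i=1,2$, so $(\gamma_1\wedge\gamma_2)^\dagger$ is finer than both and hence finer than their join. For the equality when both criteria are topological, set $\sigma\equiv\gamma_1^\dagger\vee\gamma_2^\dagger$ and $\tau\equiv(\gamma_1\wedge\gamma_2)^\dagger$; it remains to prove $\tau\subseteq\sigma$, and this is the step I expect to be the crux. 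First I would identify net convergence in the join: viewing $\sigma$ as the weak topology from the two identity maps $X\to(X,\gamma_i^\dagger)$, Proposition~\ref{pp:weak-topology-convergence} shows $x_\lambda\to x$ in $\sigma$ iff $x_\lambda\to x$ in both $\gamma_1^\dagger$ and $\gamma_2^\dagger$. Here the \emph{topological} hypothesis (Definition~\ref{df:topological-convergence-criteria}) does the essential work: it lets me replace convergence in each $\gamma_i^\dagger$ by the criterion $\gamma_i$ itself, so that $\sigma$-convergence coincides with $\gamma_1\wedge\gamma_2$ as a relation on (net, point) pairs. Given that, if $A$ is $\tau$-closed and $x$ lies in the $\sigma$-closure of $A$, then some net in $A$ converges to $x$ in $\sigma$, hence via $\gamma_1\wedge\gamma_2$, and $\tau$-closedness forces $x\in A$; thus $A$ is $\sigma$-closed and $\tau\subseteq\sigma$. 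The delicate points are precisely these two invocations\,---\,that convergence in a supremum of topologies is simultaneous convergence in each factor, and that a closure point is a net limit from the set\,---\,and without the topological hypothesis only the one-sided inclusion survives.
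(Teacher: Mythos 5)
Your proof is correct and follows essentially the same route as the paper: parts (a) and (b) are the same closed-set comparisons, and in (c) you invoke the topological hypothesis exactly as the paper does\,---\,to identify net convergence in $\gamma_1^\dagger\vee\gamma_2^\dagger$ with the conjunction $\gamma_1\wedge\gamma_2$\,---\,and then transfer closedness (the paper packages that last step as an application of (a) to the criterion ``net convergence in the join'' together with the fact that this criterion regenerates the join topology, but this rests on the same standard fact you flag, that closure points are net limits). Incidentally, your pairing of the degenerate cases (all nets converge gives the indiscrete topology, only constant nets the discrete) is the mathematically correct one, which the bracketed phrasing in the theorem statement inverts.
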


\begin{proof}
(a)
Set $\tau_i\equiv\gamma_i^\dagger$, $i=1,2$.  Suppose $E$ is closed in
$\tau_2$. If $\gamma_1(x_\lambda\rightarrow x)$ is true with $x_\lambda\in E$
then $\gamma_2(x_\lambda\rightarrow x)$ is true, from which $x\in E$. Thus $E$
is closed in $\tau_1$, so $\tau_2\subseteq\tau_1$, i.e.,
$\gamma_1^\dagger\supseteq\gamma_2^\dagger$. The last two statements follow
from this, or directly: If every net converges then any constant net of any
point in any set converges to any point not in that set, from which the only
closed sets are the empty set and the whole space, and the topology generated
is indiscrete. If no net converges then the condition that a set be closed is
vacuously true for any set, and the topology generated is indiscrete.

(b)
Suppose $E$ is closed in $\tau_1\wedge\tau_2$. Since
$\tau_1\wedge\tau_2\subseteq\tau_1$ and $\tau_1\wedge\tau_2\subseteq\tau_2$,
$E$ is closed in both $\tau_1$ and $\tau_2$. So if
$\gamma_1\vee\gamma_2(x_\lambda\rightarrow x)$ is true then one of
$\gamma_1(x_\lambda\rightarrow x)$ or $\gamma_2(x_\lambda\rightarrow x)$ is,
and $x\in E$ in either case. This shows $E$ is closed in
$(\gamma_1\vee\gamma_2)^\dagger$ and hence that
$\tau_1\wedge\tau_2\subseteq(\gamma_1\vee\gamma_1)^\dagger$. Conversely,
$\gamma_1\Rightarrow\gamma_1\vee\gamma_2$ and
$\gamma_2\Rightarrow\gamma_1\vee\gamma_2$, from which
$\tau_1\supseteq(\gamma_1\vee\gamma_2)^\dagger$ and
$\tau_2\supseteq(\gamma_1\vee\gamma_2)^\dagger$ and
$(\gamma_1\vee\gamma_2)^\dagger\subseteq\tau_1\cap\tau_2=\tau_1\wedge\tau_2$.

(c)
If $\gamma_1\wedge\gamma_2\Rightarrow\gamma_1$ and
$\gamma_1\wedge\gamma_2\Rightarrow\gamma_2$ then
$(\gamma_1\wedge\gamma_2)^\dagger\supseteq\tau_1$ and
$(\gamma_1\wedge\gamma_2)^\dagger\supseteq\tau_2$. But $\tau_1\vee\tau_2$ is
the coarsest topology containing both $\tau_1$ and $\tau_2$, so
$\tau_1\vee\tau_2\subseteq(\gamma_1\wedge\gamma_2)^\dagger$.  For the second
part, let $\hat\gamma,\hat\gamma_1,\hat\gamma_2$ be the convergence criteria
defined by the topologies $\tau_1\vee\tau_2$, $\tau_1$, and $\tau_2$,
respectively. The join topology $\tau_1\vee\tau_2$ contains both $\tau_1$ and
$\tau_2$, so $\hat\gamma\Rightarrow\hat\gamma_1$ and
$\hat\gamma\Rightarrow\hat\gamma_2$.  Since $\hat\gamma_1=\gamma_1$ and
$\hat\gamma_2=\gamma_2$, it follows that
$\hat\gamma\Rightarrow\gamma_1\wedge\gamma_2$, so
$\gamma_1^\dagger\vee\gamma_2^\dagger=\tau_1\vee\tau_2=\hat\gamma^\dagger\supseteq(\gamma_1\wedge\gamma_2)^\dagger$.
\end{proof}

With respect to~(c),
$(\gamma_1\wedge\gamma_2)^\dagger\supsetneq\gamma_1^\dagger\vee\gamma_2^\dagger$
is possible: consider the three point set $\{1,2,3\}$ with the topology
$\tau_1=\{\emptyset,1,12,123\}$. This is one of the known 29~topologies on
three point sets. Let $\gamma_1$ be the convergence criteria defined as every
constant net converges to its value, the constant net $1$ also converges to
$2$, and the constant net $2$ also converges to $3$.  As is easily verified
(suppress the set braces)
\begin{equation*}\begin{split}
  &\onm{pcl}_{\gamma_1}(\emptyset)=\emptyset,\quad
  \onm{pcl}_{\gamma_1}(1)=12,\quad
  \onm{pcl}_{\gamma_1}(2)=23,\quad
  \onm{pcl}_{\gamma_1}(3)=3,\\
  &\onm{pcl}_{\gamma_1}(23)=23,\quad
  \onm{pcl}_{\gamma_1}(12)=\onm{pcl}_{\gamma_1}(13)=\onm{pcl}_{\gamma_1}(123)=123,\quad
\end{split}\end{equation*}
so that the closed sets are $\emptyset, 3, 23, 123$, and the open sets, being
complements of closed sets, are $123,12,1,\emptyset$, i.e., $\gamma_1$
generates $\tau_1$. In the topology $\tau_1$, the only open set containing $3$
is $123$, so every net converges to $3$, and in particular, the constant net
$1$ converges to $3$. So $\gamma_1$ is not topological since that net does not
converge by $\gamma_1$. Similarly, the convergence criteria $\gamma_2$ defined
as every constant net converging to its value, the constant net $1$ converges
to $3$, and the constant net $3$ converges to $2$ generates the topology
$\tau_2=123,13,1,\emptyset$. The join topology $\tau_1\vee\tau_2$ is
$\tau_1\cup\tau_2=\emptyset,1,12,13,123$ but in $\gamma_1\wedge\gamma_2$ only
the constant nets converge to themselves and $(\gamma_1\wedge\gamma_2)^\dagger$
is the discrete topology.

Recall the familiar inductive limit topology on subsets: suppose that $I$ is a
directed set, $X_i\subset X$, and $X_i$ are topological spaces such that
$X_i\subseteq X_j$ with continuous inclusion whenever $i\le
j$. The \emph{inductive limit topology} is the finest topology such that
every inclusion $X_i\rightarrow X$ is continuous. A subset $U$ is open in the
inductive limit topology if and only if $U\cap X_i$ is open for all $i$.
Consider the convergence criteria: $x_\alpha\rightarrow x$ if there are an
$\alpha^*$ and $i$ such that $x_\alpha\in X_i$ whenever $\alpha\ge\alpha^*$,
and $x_\alpha\rightarrow x$ in the topology of $x_i$. Call this criteria
\emph{eventual membership convergence}. The inductive limit topology is
especially convenient with respect to convergence:

\begin{theorem}\label{th:membership-convergence-generates-inductive-limit}
Eventual membership convergence generates the inductive limit topology.
\end{theorem}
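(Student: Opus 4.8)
The plan is to identify the two topologies by comparing their closed sets, using the net-theoretic description of each. Recall that the inductive limit topology $\tau$ has as its closed sets exactly those $A\subseteq X$ for which $A\cap X_i$ is closed in $X_i$ for every $i$ (this complements the stated criterion that $U$ is open iff $U\cap X_i$ is open for all $i$). On the other side, I would first check that eventual membership convergence satisfies the subnet condition Theorem~\ref{th:convergence-topology}(b): if $x_\alpha\to x$ with $x_\alpha\in X_i$ finally and $x_\alpha\to x$ in $X_i$, then any subnet is also finally in $X_i$ and converges to $x$ in $X_i$, hence again converges by eventual membership. This guarantees that $\gamma^\dagger$ is a genuine topology whose closed sets are, by Theorem~\ref{th:preconvergence-suffices-closed-continuous}(1), precisely the sets $A$ for which $x_\alpha\in A$ and $x_\alpha\to x$ (eventual membership) force $x\in A$.

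For the inclusion $\gamma^\dagger\supseteq\tau$ (equivalently, every $\tau$-closed set is $\gamma^\dagger$-closed, so that $\gamma^\dagger$ is the finer topology), I would take a $\tau$-closed $A$, so $A\cap X_i$ is closed in $X_i$ for all $i$, together with a net $x_\alpha\in A$ such that $x_\alpha\to x$ by eventual membership. By definition there are $\alpha^*$ and $i$ with $x_\alpha\in X_i$ for $\alpha\ge\alpha^*$ and $x_\alpha\to x$ in $X_i$; the tail net then lies in $A\cap X_i$ and converges to $x$ in $X_i$, so $x\in A\cap X_i\subseteq A$ because $A\cap X_i$ is closed in $X_i$. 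Hence $A$ is $\gamma^\dagger$-closed. For the reverse inclusion $\gamma^\dagger\subseteq\tau$, I would take a $\gamma^\dagger$-closed $A$ and fix $i$; a net in $A\cap X_i$ converging to $x$ in $X_i$ is eventual-membership convergent (with the constant index $i$), so its limit $x$ lies in $A$, and also in $X_i$ since limits in $X_i$ are points of $X_i$. Thus $A\cap X_i$ is closed in $X_i$ for every $i$, which says exactly that $A$ is $\tau$-closed. The two inclusions give $\gamma^\dagger=\tau$.

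I expect no single hard step; the content is entirely the faithful translation between membership-in-a-set and net convergence. The one point requiring care is the legitimacy of restricting to the tail net: eventual membership only places $x_\alpha$ in $X_i$ beyond some $\alpha^*$, so I must pass to the final tail to land inside $A\cap X_i$ and invoke closedness there, and I must remember that the limit of a net converging in $X_i$ is automatically a point of $X_i$, so that the intersections $A\cap X_i$ do capture the limit. Everything else is the standard fact that closedness in a topological space is detected by net convergence, together with the closed-set characterizations already recorded for $\gamma^\dagger$ and for the inductive limit topology.
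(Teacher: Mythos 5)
Your proof is correct, and its two inclusions rest on exactly the two observations that drive the paper's own proof: an eventual-membership convergent net eventually lives in a single $X_i$ (your tail-net argument), and any net convergent in a fixed $X_i$ satisfies the criterion with constant index $i$. Where you differ is the machinery wrapped around these observations. You compare closed sets explicitly on both sides: $\tau$-closed means $A\cap X_i$ is closed in $X_i$ for every $i$, and $\gamma^\dagger$-closed means stable under limits of generating nets; both inclusions are then direct verifications from these descriptions. The paper instead plays two universal properties against each other: $\gamma^\dagger$ is the finest topology in which every generating net converges (so $\gamma^\dagger\supseteq\tau$ follows once eventual-membership convergence is seen to imply $\tau$-convergence, via $\tau$-continuity of the inclusions), and $\tau$ is the finest topology making every inclusion $X_i\to X$ continuous (so $\gamma^\dagger\subseteq\tau$ follows once each inclusion is shown continuous into $\gamma^\dagger$, via the net criterion of Theorem~\ref{th:preconvergence-suffices-closed-continuous}); the explicit closed-set description of $\tau$ is never touched. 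Your version is more elementary and self-contained, and it has the merit of making explicit the subnet-stability check (Theorem~\ref{th:convergence-topology}(b)) that legitimizes the closed-set description of $\gamma^\dagger$\,---\,a point the paper leaves implicit. The paper's version buys generality: it is a template that applies whenever the target topology is characterized as the finest one satisfying some condition, even when no explicit description of its open or closed sets is available, and indeed the quotient-topology discussion immediately following the theorem reuses that template verbatim.
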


\begin{proof}
There is a general approach for such results: to show a convergence topology is
equal to another given topology, show first that the convergence criteria
converges in the topology, so that
Theorem~\ref{th:convergence-topology-lattice}(a) implies the convergence
topology is finer. In the case that the given topology is the finest satisfying
some condition, then showing that the convergence topology satisfies that same
condition completes the proof. So, suppose that $x_\alpha\rightarrow x$
(eventual membership), and choose $\alpha^*$ and $i$ as in the definition. Then
the inclusion of $X_i\rightarrow X$ is continuous in the inductive limit
topology, so $x_\alpha$ converges in in that. Conversely, pick any $i$, suppose
$x_\lambda\in X_i$ and $x_\lambda\rightarrow x$ in the topology of $X_i$, and
let $\iota\colon X_i\rightarrow X$ be the inclusion. Then
$\iota(x_\lambda)=x_\lambda$ and $x_\lambda$ satisfies the eventual membership
convergence criteria, so $\iota(x_\lambda)$ converges in the topology generated
by that. Thus $\iota$ is continuous
by Theorem~\ref{th:preconvergence-suffices-closed-continuous}, but the inductive limit
topology is the finest in which each such inclusion is continuous.
\end{proof}

Suppose $X$ is a topological space, $\pi\colon X\to Y$ is onto, and define the
convergence criteria $\gamma$ by $y_\lambda\to y$ if there is a net $x_\lambda$
such that $\pi(x_\lambda)=y_\lambda$, $x_\lambda$ converges to some $x$, and
$\pi(x)=y$. Let $\tau$ be the quotient topology on $y$. Then $\pi$ is
continuous in the quotient topology on $Y$, so $\gamma$-convergence implies
$\tau$-convergence and $\gamma^\dagger\supseteq\tau$. The quotient topology
$\tau$ is the finest topology such that $\gamma$ is continuous, and, if
$x_\lambda\to x$ in $X$ then $\gamma$ is true for $\pi(x_\lambda)\to\pi(x)$ and
so $\pi$ is continuous in $\gamma^\dagger$. Hence $\gamma$ generates the
quotient topology. Let $X=\set{(x,1/x)}{x>0}\cup\sset{0}\times\bbR$, i.e., the
union of the graph of $y=1/x$ and the $y$-axis, with the subspace topology from
$\bbR^2$. Define $Y=[0,\infty)$, with the usual topology, and 
$\pi\colon X\to Y$ by $\pi(x,y)=x$. Then $\pi$ is a quotient map and
$1/n\rightarrow 0$ in $Y$ but there is no convergent $(x_n,y_n)\in X$ and a
$y$ such that $(x_n,y_n)\to (0,y)$. Thus there are convergent nets in $X$
that are not convergent in the $\gamma$ topology.

\end{document}